\newtheorem*{rep@theorem}{\rep@title}
\newcommand{\newreptheorem}[2]{%
\newenvironment{rep#1}[1]{%
 \def\rep@title{#2 \ref{##1}}%
 \begin{rep@theorem}}%
 {\end{rep@theorem}}}
\newtheorem{theorem}{Theorem}[section]
\newtheorem{lemma}[theorem]{Lemma}
\newtheorem{proposition}[theorem]{Proposition}
\newtheorem{corollary}[theorem]{Corollary}
\theoremstyle{definition}
\newtheorem{remark}[theorem]{Remark}
\newtheorem{remarks}[theorem]{Remarks}
\newtheorem{definition}[theorem]{Definition}
\newtheorem*{conjecture}{Conjecture}
\newtheorem{definitions}[theorem]{Definitions}
\def\beq{\begin{eqnarray*}}
\def\eeq{\end{eqnarray*}}
\def\R{\mathbb{R}}
\def\Z{\mathbb{Z}}
\def\incl{\hookrightarrow}
\def\to{\rightarrow}
\def\eps{\varepsilon}
\def\x{\times}
\def\phi{\varphi}
\def\Emb{\mathrm{Emb}}
\def\Aut{\mathrm{Aut}}
\def\HD{\mathcal{HD}}
\def\HW{\mathcal{HW}}
\def\HV{\mathcal{HV}}
\def\HC{\mathcal{HC}}
\def\HT{\mathcal{HT}}
\def\LT{\mathcal{LT}}
\def\LD{\mathcal{LD}}
\def\LW{\mathcal{LW}}
\def\LV{\mathcal{LV}}
\def\LC{\mathcal{LC}}
    \title[Integrals for Milnor invariants of string links]{Milnor invariants of string links, trivalent trees, and configuration space integrals}
    \author{Robin Koytcheff}
    \address{Department of Mathematics, University of Louisiana at Lafayette, Lafayette, LA}
    \email{koytcheff@louisiana.edu}
    \urladdr{people.math.umass.edu/~koytcheff}
    \author{Ismar Voli\'c}
    \address{Department of Mathematics, Wellesley College, Wellesley, MA}
    \email{ivolic@wellesley.edu}
    \urladdr{ivolic.wellesley.edu}
\thanks{The first author was supported in part by a PIMS Postdoctoral Fellowship.  The second author was supported in part by the Simons Foundation.}
	    \keywords{configuration space integrals, Milnor invariants, trivalent trees, string links, graph cohomology, trivalent diagrams, finite type invariants, cohomology of spaces of links}
	\subjclass[2010]{57M27, 57Q45, 81Q30, 57R40}
\begin{document}
\maketitle

\begin{abstract}  
We study configuration space integral formulas for Milnor's homotopy link invariants, showing that they are in correspondence with certain linear combinations of trivalent trees.
Our proof is essentially a combinatorial analysis of a certain space of trivalent ``homotopy link diagrams'' which corresponds to all finite type homotopy link invariants via configuration space integrals.
An important ingredient is the fact that configuration space integrals take the shuffle product of diagrams to the product of invariants.  
We ultimately deduce a partial recipe for writing explicit integral formulas for products of Milnor invariants from trivalent forests.  
We also obtain cohomology classes in spaces of link maps from the same data.

\end{abstract}





\tableofcontents

\parskip=4pt
\parindent=0cm

\section{Introduction}

This article studies the relationship between two concepts.  The first is the link invariants defined by Milnor in his senior thesis in 1954 \cite{Milnor1954}.   Some of these are invariant under not just isotopy but also \emph{link homotopy},
which essentially means that they  detect linking of different strands but ignore knotting of individual strands.  Milnor considered closed links, for which these invariants are only defined modulo greatest common divisors of lower-order invariants.  In 1990, Habegger and Lin \cite{HabeggerLin} succeeded in classifying links up to link homotopy (i.e.~homotopy links) by using string links,\footnote{We consider a space of \emph{long links} which is homeomorphic to the space of string links.}
for which the Milnor invariants are well defined integer invariants.  
(See Section \ref{Background} for the precise definitions of string links and link homotopy.)
Furthermore, Milnor invariants are \emph{finite type} (or \emph{Vassiliev}) invariants (see Section \ref{FiniteType}), and these invariants distinguish string links up to link homotopy \cite{HabeggerLin, BarNatanJKTR}.  


The second side of the story is configuration space integrals (a brief overview of which is given in Section \ref{Integrals}).  They were first used by Gauss to define the linking number of two closed curves and were generalized by Bott and Taubes \cite{BottTaubes} in the 1990s.  
D.~Thurston \cite{DThurstonABThesis} used them to construct finite type knot and link invariants, and Cattaneo, Cotta-Ramusino, and Longoni extended them to knots in higher dimensions to obtain information about the cohomology of spaces of knots \cite{CCRL-AGT}. 
A further development was undertaken in \cite{KMV} where the authors extended configuration space integrals to long links and used them to construct all finite type invariants of this space. The motivation for the results in this paper in fact comes from the work in \cite{KMV}.  In short, as mentioned above, Milnor invariants are in particular of finite type, so some of the integrals 
constructed in \cite{KMV} are thus Milnor invariants.  It is our goal here to elucidate that relationship.

An important auxiliary ingredient for configuration space integrals are trivalent diagrams (or graphs), which are combinatorial objects and which form a part of a cochain complex (see Section \ref{Diagrams}).  
More specifically, each finite type invariant is given by a sum of integrals, and this linear combination of integrals corresponds to a linear combination of diagrams (see Section \ref{Integrals} for an overview).
The subspace of diagrams which correspond to \emph{link homotopy} invariants of string links was first found by Bar-Natan \cite{BarNatanJKTR} and further developed (in terms of the cochain complex and configuration space integrals) in \cite{KMV}.   It is these diagrams that will ultimately provide the connection between Milnor invariants of string links and configuration space integrals.

Combinatorial formulas for Milnor homotopy invariants were studied before by Mellor \cite{MellorMilnorWeight} who describes them recursively in terms of \emph{chord diagrams} (which are a special kind of trivalent diagrams).  This by itself does not give a clear conceptual picture, in part because Mellor's construction involves only the space of chord diagrams whereas configuration space integrals require enlarging this space to include all trivalent diagrams.    
Our results not only provide the diagram combinatorics for Milnor homotopy invariants, but in addition give an explicit way to construct them, up to lower order invariants, from certain trivalent trees using configuration space integrals.

Earlier related results are those of Bar-Natan \cite[Theorem 3]{BarNatanJKTR} and Habegger and Masbaum \cite[Theorem 6.1, Proposition 10.6]{HabeggerMasbaum}, who in fact established a more conceptual relationship between Milnor invariants and a space of trivalent trees modulo the IHX relation.  
(This relationship was perhaps expected, given the relationship between trivalent trees and Lie bracket expressions and the fact that Milnor invariants come from the lower central series of the link group.)
The work of Habegger and Masbaum gives a formula \cite[Figures 7, 8]{HabeggerMasbaum} in terms of the Kontsevich integral \cite{ChmutovDuzhin, BarNatanTopology, KontsevichVassiliev}, at least for the first non-vanishing Milnor invariant of a string link.  However, this formula appears to be at the level of the associated graded space and thus does not completely capture entire configuration space integral expressions.  
Moreover, Habegger and Masbaum, as well as Bar-Natan, consider a quotient of the space diagrams (essentially homology) whereas we view configuration space integrals as naturally indexed by the dual subspace (essentially cohomology). 
These dual spaces are not canonically isomorphic  (see Remark \ref{Dualizing}(a)).
Our purpose is to organize the conceptual relationship between Milnor invariants and trivalent trees with a view towards configuration space integral formulas for those invariants.  
A compelling motivation for 
providing explicit integral formulas is 
their potential application in geometric contexts, such as 
\cite{Komendarczyk-Michaelides-Ropelength, KomendarczykVolic}.  




Our approach is essentially a combinatorial analysis of the space of cocycles of trivalent diagrams using the shuffle product.  This product appears in the work of Cattaneo, Cotta-Ramusino, and Longoni \cite{CCRL-JKTR} and in the works of Turchin and Lambrechts \cite{Victor-Bialgebra, Victor-Pascal}.
Its main advantage is that it corresponds via configuration space integrals to the product of invariants.  
Thus it seems advantageous to redevelop the relationship between invariants and trees directly in this setting, rather than via ubiquitous dualizing of previous results of other authors.  
Using the shuffle product also allows us to relate Milnor homotopy invariants to trivalent trees without using any further structure on the space of diagrams.
For example, stacking long links defines a coproduct on diagrams compatible with the shuffle product, but
 we will not use such a coproduct in obtaining our results (see Remark \ref{Dualizing}(b)).
 
In this work, we consider only \emph{Milnor homotopy invariants}, and throughout this paper, we will often mean such invariants even when we omit the term ``homotopy.''  However, in our forthcoming work with Komendarczyk on integrals for spaces of braids \cite{KKV-Braids-Integrals}, we will be able to deduce analogous statements about integral formulae for arbitrary Milnor invariants (which are invariants of concordance but not necessarily homotopy).
 

\subsection{Statement of main results}
We now 
outline our main results; the reader may consult Definitions \ref{WeightSystemsDef}, \ref{FilterByComponentsDef}, and \ref{ConnectedIHXDef} for more details on the following notions.  
Let $\LW$ (resp.~$\HW$) be the space of weight systems for string links (resp.~homotopy string links).  By a pairing on graphs, we view weight systems as (defect zero) graph cocycles.  
By the \emph{leading terms} of a graph cocycle, we mean the terms with the fewest connected components.
We view $\LW$ and $\HW$ as algebras via the shuffle product.  Let $S(\mathcal{J})$ (resp.~$S(\mathcal{T})$) be the symmetric algebra on linear combinations of connected diagrams (resp.~trees) satisfying the IHX conditions.   
Let $\LW^k_n$ denote the space of graph cocycles with $2n$ vertices with at least $k$ components.  Let $S^k(\mathcal{J})_n$ 
(resp.~$S^k(\mathcal{T})_n$) be the subspace of elements of the $k$-th symmetric power of $\mathcal{J}$ (resp.~$\mathcal{T}$) with $2n$ vertices.

\begin{reptheorem}{LWSubalgebra}
Taking the leading terms of a cocycle defines injective maps  $$\LW^k_n / \LW^{k+1}_n \incl S^k(\mathcal{J})_n.$$  
A choice of splitting of $\LW$ induces an injective algebra map $\LW \incl S(\mathcal{J})$ which takes the shuffle product on $\LW$ to multiplication in $S(\mathcal{J})$.
\end{reptheorem}

Let $\HV$ be the space of finite type invariants of homotopy string links; the product of invariants makes this space into an algebra.  We also see that there is a filtration of $\HV$ by subspaces defined by  products of at least $k$ Milnor invariants, denoted by $\HV^k_n$.  The restrictions to $\HW$ of the maps in Theorem \ref{LWSubalgebra} land in $S(\mathcal{T})$.  We use the canonical map $\HV \to \HW$, the filtrations by $\geq k$ components, and the Milnor invariants to check that these maps are isomorphisms and obtain a canonical splitting of $\LW$.


\begin{reptheorem}{AlgIsos}
The Milnor homotopy invariants give canonical isomorphisms of algebras $$\HV \cong \HW \cong S(\mathcal{T})$$ using the products of  invariants on $\HV$, the shuffle product on $\HW$, and the usual product on $S(\mathcal{T})$.  
\end{reptheorem}

This result recovers the correspondence between Milnor invariants and trivalent trees (in the setting of homotopy invariants).  Unwinding the definitions of the filtration quotients and maps on them yields the following partial recipe for configuration space integral formulas.

\begin{repcorollary}{PartialRecipe}
Suppose $\alpha$ is a linear combination of (Lie-oriented) trivalent forests (with distinct labels on the leaves) satisfying the IHX conditions.
Then one can obtain a product of Milnor homotopy invariants
 by 
completing $\alpha$ to a cocycle $\alpha'$ 
by adding terms with more connected components 
and taking the configuration space integral associated to $\alpha'$.
All choices of the terms with more components yield the same invariant up to products of larger numbers of Milnor invariants.
\end{repcorollary}

In summary, our combinatorial analysis of the space of trivalent diagrams in terms of Milnor invariants has several main consequences.  
The first is to provide alternative proofs of certain facts about Milnor invariants, using mainly the shuffle product of diagrams.
The second is to give configuration space integral formulas  for Milnor invariants, up to products of lower-order invariants.  
Finally, since the same combinatorics of diagrams gives cohomology classes in spaces of long links in any dimension, we obtain the following result.

\begin{reptheorem}{HigherCohomology}
There is a subalgebra of cohomology classes in $\mathrm{Link}(\coprod_{i=1}^m \R, \R^d)$ isomorphic to the polynomial algebra on Milnor homotopy invariants.  
\end{reptheorem}


\subsection{Organization of the paper}
In Section \ref{Background}, we review some background material on finite type invariants,  Milnor invariants, trivalent diagrams, and configuration space integrals.  In Section \ref{Examples}, we express the simplest few Milnor invariants in terms of configuration space integrals.  This illustrates the main result of this paper.  It also shows that the formulas  become quite complicated even for type 3 Milnor invariants of 4-component links.  In Section \ref{Results}, we state and prove the main result of this paper, Theorem \ref{FiltrationIsomorphisms}.  This involves filtrations on spaces of diagrams and on spaces of finite type invariants of homotopy string links, as well as the shuffle product.  Finally, in Section \ref{Cohomology}, we consider spaces of link maps of 1-manifolds in $\R^d$, $d\geq 4$, and deduce that the graph cocycles which yield Milnor invariants in the case $d=3$ yield nontrivial cohomology classes in these spaces.  

\subsection{Acknowledgments}
We thank the referee for a careful reading and useful comments.  
The first author thanks Ben Williams for a discussion about linear algebra.  

\section{Background}
\label{Background}

\subsection{Homotopy long links}

A \emph{long link} $L$ on $m$ \emph{components} or \emph{strands} is a smooth embedding of $m$ disjoint copies of $\R$ into $\R^d$, $d\geq 3$, which outside of $\coprod_{i=1}^m [-1,1]$ is given by a fixed affine-linear embedding on each component.  For technical reasons, we require the directions of the $m$ affine-linear embeddings to be distinct (see \cite[Section 2.1]{KMV}).  A \emph{homotopy long link} is defined similarly, except that we no longer require $L$ to be an embedding on $\coprod_{i=1}^m [-1,1]$; instead we only require it to be a smooth map such that the images of any pair of components are disjoint.  (Such a map is often also called a \emph{link map} of $\coprod_{i=1}^m \R$ into $\R^d$.)  

We denote the set of all long links as $\mathrm{Emb}(\coprod_{i=1}^m \R, \R^d)$ and the set of homotopy long links as $\mathrm{Link}(\coprod_{i=1}^m \R, \R^d)$.  We topologize these spaces as in \cite[Section 2.2]{KMV}.  A path of long links is an \emph{isotopy}, and a path of homotopy long links is a \emph{link homotopy}.  By abuse of terminology, we also use ``homotopy long link'' to refer to a path component (i.e.~link homotopy class), rather than an element, of this space (just as a ``knot'' may refer to either an embedding or its isotopy class).  We abbreviate $\pi_0(\mathrm{Emb}(\coprod_{i=1}^m \R, \R^3))$ as $\mathcal{L}_m$ and $\pi_0(\mathrm{Link}(\coprod_{i=1}^m \R, \R^3))$ as $\mathcal{H}_m$.  Since every homotopy long link is isotopic to an embedded long link, there is a canonical quotient map $\mathcal{L}_m \to \mathcal{H}_m$.

The stacking of long links makes $\mathcal{L}_m$ and $\mathcal{H}_m$ into monoids.  Moreover, $\mathcal{H}_m$ is a group and is nilpotent.  It is a quotient of the pure braid group and more specifically an iterated semidirect product of reduced free groups (see \cite[Section 1]{HabeggerLin} or \cite{LinPowerSeries}).  Although we will not use this structure, it provides one possible approach to studying finite type invariants of homotopy long links.

\subsection{Finite type invariants of homotopy long links}\label{FiniteType}
An \emph{isotopy invariant} (resp.~\emph{homotopy invariant}) of $m$-component long links is a map $\mathcal{L}_m \to \R$ (resp.~$\mathcal{H}_m \to \R$).  Thus a homotopy invariant is also an isotopy invariant.
Any invariant extends to \emph{singular long links}, i.e.~long links with finitely many intersection points,  
by inductively using the Vassiliev skein relation, where the arrows denote the orientation(s) of the link component(s):

\[
v\left(\raisebox{-0.8pc}{\includegraphics[height=2pc]{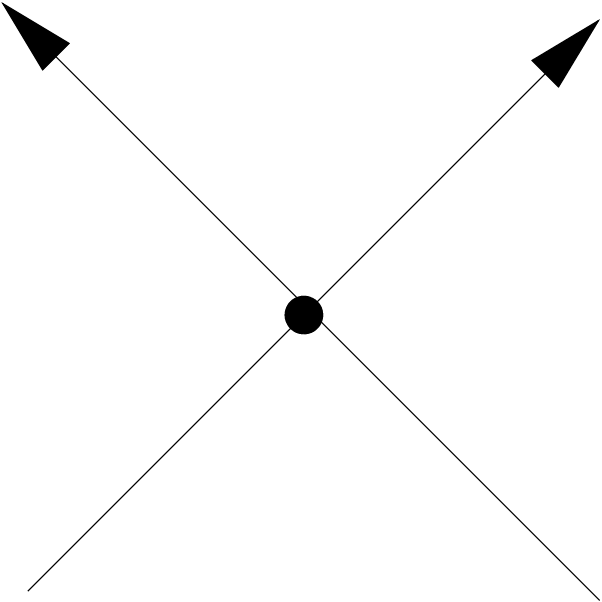}}\right) = v\left(\raisebox{-0.9pc}{\includegraphics[height=2pc]{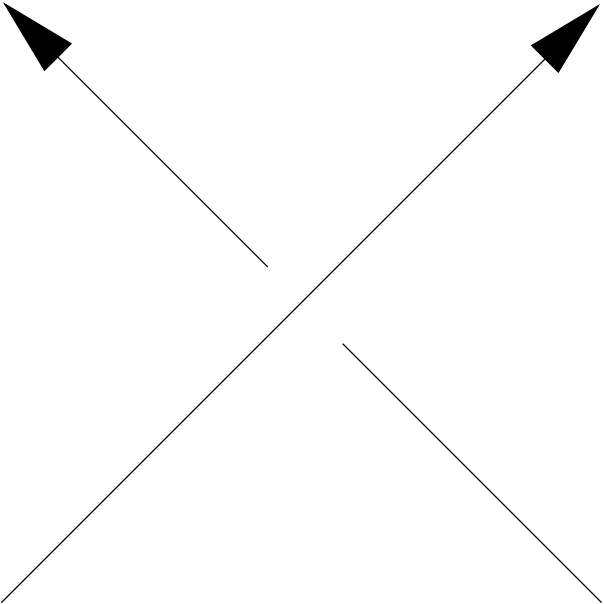}}\right) - v\left(\raisebox{-0.9pc}{\includegraphics[height=2pc]{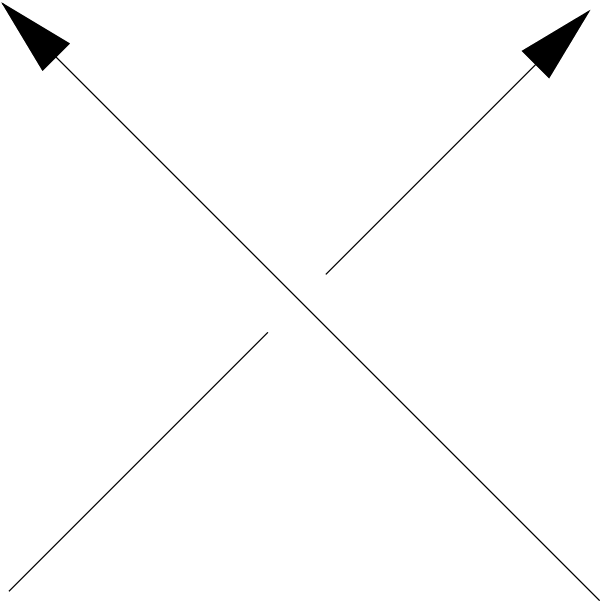}}\right)
\]

An invariant is called \emph{finite type of \emph{type} $n$} (or of \emph{order} $n$) if it vanishes on all links with more than $n$ such intersection points.  
Let $\LV_n$ (resp.~$\HV_n$) denote the $\R$-vector space of type $n$ finite type invariants of long links in $\mathcal{L}_m$ (resp.~homotopy long links in $\mathcal{H}_m$).\footnote{One can also define $\HV_n$ as functions on the quotient of the group ring of $\mathcal{H}_m$ by the $n$-th power of the augmentation ideal, as noted for example in \cite[Theorem 30]{WillertonThesis}.}
Milnor's homotopy invariants of links (which we will review in Section \ref{MilnorInvs}) are finite type invariants of $\mathcal{H}_m$.  In particular, a Milnor invariant involving $n+1$ strands of the link is of type $n$.
Using work of Habegger and Lin \cite{HabeggerLin}, Bar-Natan \cite{BarNatanJKTR} showed that finite type invariants classify homotopy long links.

The product of two finite type invariants $v_1$ and $v_2$ is given simply by $$v_1v_2(L):=v_1(L)v_2(L).$$
The proof of the following can be found, for example, in \cite[p.~75]{Chmutov-Duzhin-Mostovoy}.

\begin{lemma}
\label{ProductOfFiniteTypes} 
The product of two finite type link invariants of types $n$ and $p$ is a finite type link invariant of type $n+p$.  
\qed
\end{lemma}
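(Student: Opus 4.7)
The plan is to establish a Leibniz-type rule for the Vassiliev extension of the product $V_1 V_2$ at a single double point, and then iterate it over all double points of a singular link. First I would write $L_\times$ for a singular link with a chosen double point, and $L_+, L_-$ for its two resolutions. Applying the Vassiliev skein relation to $V_1 V_2$ at this double point and regrouping by adding and subtracting $V_1(L_+) V_2(L_-)$ gives
$$(V_1 V_2)(L_\times) = V_1(L_+)\,V_2(L_\times) + V_1(L_\times)\,V_2(L_-).$$
This is the crucial Leibniz-type identity: at each double point, the product $V_1 V_2$ splits so that the double point is assigned either to the $V_1$-factor (with a positive resolution on the other side) or to the $V_2$-factor (with a negative resolution on the other side).

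Next I would iterate this identity by induction on the number $k$ of double points of $L$, choosing an ordering $d_1,\ldots,d_k$ and applying the Leibniz rule at each one in turn. This would yield a formula of the form
$$(V_1 V_2)(L) = \sum_{S \subseteq \{1,\ldots,k\}} V_1(L_S^+)\,V_2(L_{S^c}^-),$$
where $L_S^+$ denotes the singular link whose double points are exactly those indexed by $S$, with the remaining double points resolved positively, and $L_{S^c}^-$ denotes the singular link whose double points are exactly those indexed by $S^c$, with the remaining double points resolved negatively.

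The final step is purely combinatorial: assume $k > r+s$ and fix any subset $S \subseteq \{1,\ldots,k\}$. If $|S| > r$, then $V_1(L_S^+) = 0$ because $V_1$ is of type $r$. Otherwise $|S| \le r$, so $|S^c| \ge k - r > s$, and then $V_2(L_{S^c}^-) = 0$ because $V_2$ is of type $s$. Thus every term in the sum vanishes, which gives $(V_1 V_2)(L) = 0$ and proves that $V_1 V_2$ is of type $r+s$.

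The main obstacle is simply the bookkeeping in the inductive step that produces the sum over subsets $S$; once that formula is in hand, the vanishing argument is immediate from a pigeonhole-type count on $|S|$ versus $|S^c|$. No delicate sign issues arise because each application of the Leibniz identity above introduces only sums (the signs from the skein relation have already been absorbed).
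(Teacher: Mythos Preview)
Your proof is correct and is the standard Leibniz-rule argument. The paper does not give its own proof of this lemma; it simply cites \cite[p.~75]{Chmutov-Duzhin-Mostovoy}, where essentially the same argument (resolve each double point via the Leibniz identity, sum over subsets, and apply pigeonhole) appears.
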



\subsection{Trivalent diagrams}
\label{Diagrams}
Finite type invariants can be described in completely combinatorial terms.  Trivalent diagrams help establish this correspondence, as well as more general results.

\begin{definition}
\label{TrivalentDiagrams}
A \emph{(long) link trivalent diagram} of order $n$ consists of $m$ segments, $2n$ vertices, some of which lie on the segments, and some \emph{edges} between vertices.  The vertices which lie on segments are called \emph{segment vertices}, while the others are called \emph{free vertices}.  For a diagram $\Gamma$, we denote the set of these by $V_{seg}(\Gamma)$ and $V_{free}(\Gamma)$ respectively, and let $$V(\Gamma) = V_{seg}(\Gamma) \sqcup V_{free}(\Gamma).$$
We require 
\begin{itemize}
\item
each vertex to be trivalent, where both the edges and segments contribute to valence; and 
\item
each vertex to be connected by a path of edges to some segment.
\end{itemize}
A \emph{homotopy (string) link trivalent diagram} is a trivalent diagram in which
no two vertices on the same segment are joined by a path of edges.  
A \emph{chord diagram} is a trivalent diagram with no free vertices; hence a chord diagram of order $n$ has exactly $n$ chords.  
\end{definition}



\begin{figure}[h!]
\begin{center}
\includegraphics[height=6.5pc]{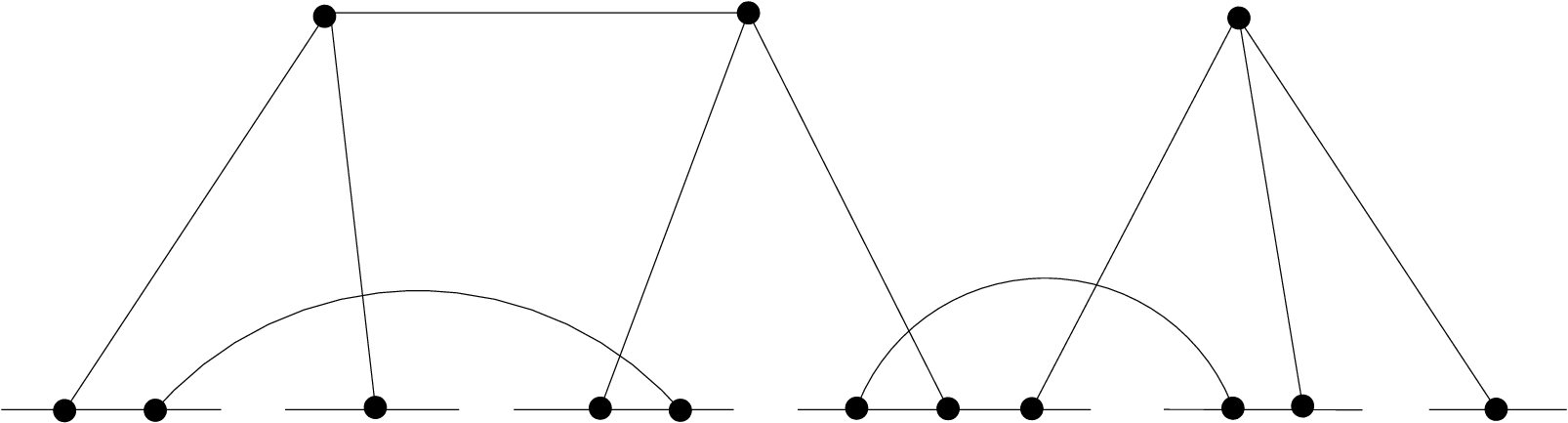}
\end{center}
\caption{A trivalent diagram of order $n=7$, where the number of segments is $m=6$.}
\end{figure}
We consider trivalent diagrams up to homeomorphisms which preserve the orientations and labels of the segments.  
Denote the $\R$-vector space of chord diagrams of order $n$ by $\LC_n$ and the subspace of homotopy chord diagrams as $\HC_n$.

A trivalent diagram $\Gamma$ is enough to determine a configuration space integral and hence a class in $H^*\Emb(\R, \R^d)$, but only up to sign.  To consistently determine signs, we need orientations. 
This means that we add certain decorations to diagrams, but diagrams differing only in these decorations are set equal, up to a sign.  See for example \cite[Definition 3.11]{KMV}.
So an orientation is an equivalence class of decorations, and thus the vector space of oriented diagrams is isomorphic to the vector space of the unoriented diagrams defined above (though the isomorphism is canonical only up to signs).  
The required type of orientation differs depending on the parity of the dimension $d$ of the ambient Euclidean space.  We first address the case of odd $d$, which is our primary focus.  We alternately use two equivalent types of orientations:
\begin{itemize}
\item
One way to orient diagrams is to order the vertices and orient each edge.
If $\Gamma$ and $\Gamma'$ differ by a permutation $\sigma$ of the vertex labels, then $\Gamma$ is declared to be equal to $(-1)^{\mathrm{sign}(\sigma)}\Gamma'$.  If $\Gamma$ and $\Gamma'$ differ by the orientation of one edge, then $\Gamma=-\Gamma'$.  The advantage of this orientation is that it is most directly related to configuration space integrals.  We thus call this an \emph{integration orientation}.
\item
A second way, often called a \emph{Lie orientation} \cite{KuperbergThurston},
is to give a cyclic ordering of the half-edges emanating from each vertex. 
Typically, one specifies such an orientation using a planar embedding of $\Gamma$, ordering the half-edges at each vertex counter-clockwise.  If $\Gamma$ and $\Gamma'$ differ by a permutation $\sigma$ of half-edges at one vertex, then $\Gamma$ is declared to be equal to $(-1)^{\mathrm{sign}(\sigma)}\Gamma'$.  
Note that if $\Gamma$ is a chord diagram, such an orientation consists of no data at all.
The Lie orientation has the advantage that it requires only a planar embedding of a diagram and no labels or arrows.  In this context, the orientation relations are often called \emph{antisymmetry (AS) relations}.
\end{itemize}
Either of the two types of orientation canonically determines the other, as explained in \cite[p.~4]{KuperbergThurston} and \cite[Proposition B.1]{DThurstonABThesis}.  
We will consider arbitrary $d$ in Section \ref{Cohomology}.
\begin{itemize}
\item
For even $d$, an orientation can be given by an ordering of the edges and an ordering of the segment vertices.  If $\Gamma$ and $\Gamma'$ differ by a permutation $\sigma$ of the edge-labels, then then $\Gamma$ is set equal to $(-1)^{\mathrm{sign}(\sigma)}\Gamma'$.  A similar relation is imposed if $\Gamma$ and $\Gamma'$ differ by a permutation of the segment vertex labels.
In this parity, this is also precisely the data needed to define a configuration space integral on the nose, not just up to sign.
\end{itemize}
Every diagram appearing in the expression for a Milnor invariant will be a forest, meaning that it has no closed loops of edges.  
Certain salient terms will be trees, meaning that any two vertices can be joined by a path of edges.  In the case of trees, as shown in the above two references, an orientation of odd type canonically determines an orientation of even type (and vice-versa).

\begin{definition}
Let $\LT_n$ denote the $\R$-vector space generated by link trivalent diagrams on $2n$ vertices, oriented as above, modulo the relations that diagrams with multiple edges between a pair of vertices or self-loops on free vertices are set to zero.  
Set $$\LT=\bigoplus_{n=0}^\infty \LT_n.$$  Similarly define $\HT_n$ and $\HT$ for the corresponding subspaces of homotopy link trivalent diagrams.
\end{definition}

\subsection{The cochain complex of diagrams}

The vector spaces $\LT_n$ (resp.~$\HT_n$) are part of a certain cochain complex $\LD$ (resp.~$\HD$) of link diagrams (resp.~homotopy link diagrams), which are not necessarily trivalent.
This complex will be used in Section \ref{Cohomology}.
\begin{definition}
Elements of $\LD$ (and $\HD$) are defined exactly as the elements of $\LT$ (and $\HT$), except that instead of requiring every vertex to have valence three, one requires that every vertex has valence \emph{at least} three.  Rather than just counting the number of vertices in a diagram, we define
\begin{itemize}
\item
 the \emph{defect} of a diagram $\Gamma$ to be $2|E(\Gamma)| - |V_{seg}(\Gamma)| - 3|V_{free}(\Gamma)|$; and
\item
the \emph{order} of a diagram $\Gamma$ to be $|E(\Gamma)| - |V_{free}(\Gamma)|$.
\end{itemize}
Diagrams in $\LD$ are oriented via the integration orientation as for $\LT$.  (The equivalent Lie orientation in the general case would require a modification to include an ordering of the even-valence vertices.)
Then let $\LD_{k,n}$ be the vector space of diagrams of defect $k$ and order $n$, and let 
$$\LD=\bigoplus_{k,n} \LD_{k,n}.$$  Define $\HD_{k,n}$ and $\HD$ similarly, but with the corresponding subspaces of homotopy link diagrams.
\end{definition}

Note that $\LT_n = \LD_{0,n}$ since trivalence is equivalent to having defect zero.  We equip 
$\LD$
with a differential $\delta$ which takes $\Gamma$ to a signed sum of graphs $\sum_i \Gamma_i$ where each $\Gamma_i$ is obtained by contracting either an edge or an \emph{arc} of $\Gamma$.  An arc is defined as part of a segment between two vertices.  The signs of the $\Gamma_i$ are determined using integration orientations on diagrams.  The differential is a map 
$$\delta \colon \LD_{k,n} \longrightarrow \LD_{k+1,n}.$$  The subspace $\HD$ is also a subcomplex of $\LD$, so we have maps $\HD_{k,n} \to \HD_{k+1,n}$ (see \cite[Definition 3.19]{KMV} for details).

The importance of the diagram complexes $\LD$ and $\HD$ comes from (co)chain maps \cite[Theorem 4.33]{KMV}
\begin{align}
\label{CochainMap}
I_{\mathcal{L}}: \LD_{k,n} \longrightarrow \Omega_{dR}^{n(d-3)+k} \mathrm{Emb}(\coprod_{i=1}^m \R, \R^d), & \ \ \ \ \mbox{ for } d>3; \\
\label{CochainMapH}
I_{\mathcal{H}}: \HD_{k,n} \longrightarrow \Omega_{dR}^{n(d-3)+k} \mathrm{Link}(\coprod_{i=1}^m \R, \R^d), & \ \ \ \ \mbox{ for } d\geq 3. 
\end{align}
Here $\Omega_{dR}^{n(d-3)+k}$ stands for de Rham cochains (i.e.~differential forms) of degree $n(d-3)+k$.  
These maps induce maps in cohomology.  Either map is given by associating to each diagram a configuration space integral.  
For $d=3$ one must add a certain ``anomaly correction term'' to $I_{\mathcal{L}}$ to make it a chain map.
These integrals will be reviewed in Section \ref{Integrals}.



\subsubsection{Cocycles in defect zero}
\label{DescribingCocycles}
Consider first the space $\LT_n^*$ spanned by the same diagrams as $\LT_n$, but viewed as dual to $\LT_n$ via the pairing
\begin{equation}
\label{PairingWithAutFactors}
\langle \Gamma, \Gamma' \rangle = \left\{
\begin{array}{ll} |\mathrm{Aut}(\Gamma)|, & \text{ if $\Gamma \cong \Gamma'$};  \\
0, & \text{ otherwise.}
\end{array}
\right.
\end{equation}
Let $STU$ denote all the relations on diagrams $S$, $T$, and $U$ of the form given in Figure \ref{STUFigure}, where the horizontal line is a link strand, the diagrams are identical outside of the pictured portions, and none of them has a multiple edge.  
\begin{figure}[h]
\[
\raisebox{-1.2pc}{\includegraphics[scale=0.15]{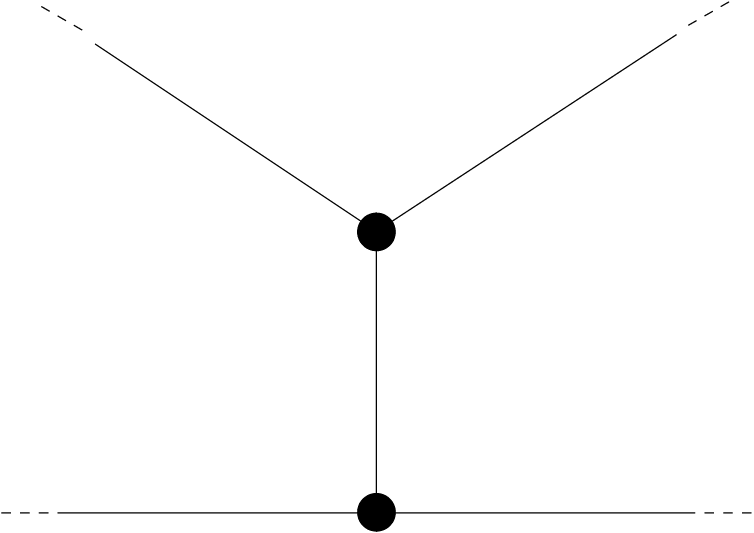}} \ -\ 
\raisebox{-1.2pc}{\includegraphics[scale=0.15]{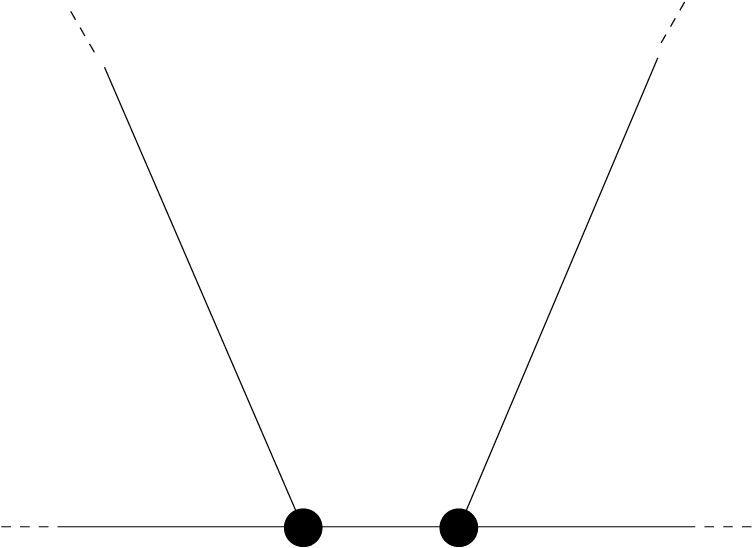}} \ +\  
\raisebox{-1.2pc}{\includegraphics[scale=0.15]{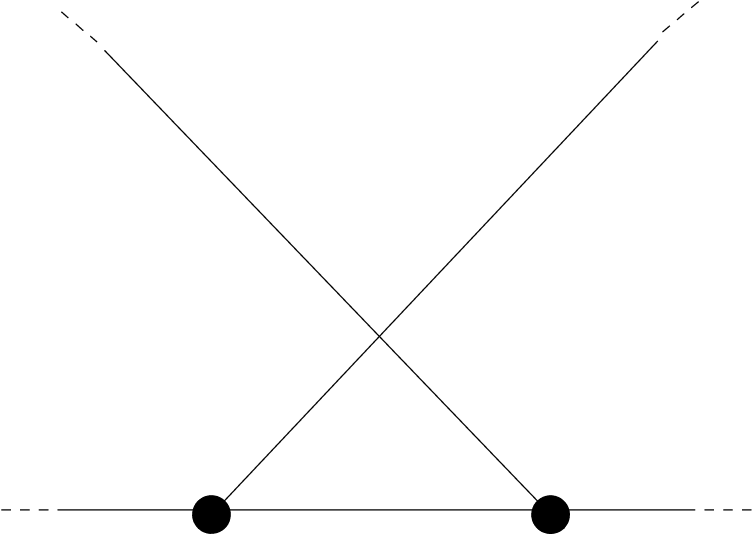}} =0
\]
\caption{The STU relation.}
\label{STUFigure}
\end{figure}

Similarly let $IHX$ denote all the relations on diagrams $I$, $H$, and $X$ of the form given in Figure \ref{IHXFigure}, where all the vertices are free vertices, the diagrams are identical outside of the pictured portions, and none of them has a multiple edge or a self-loop.  
\begin{figure}[h]
\[
\raisebox{-1.9pc}{\includegraphics[scale=0.15]{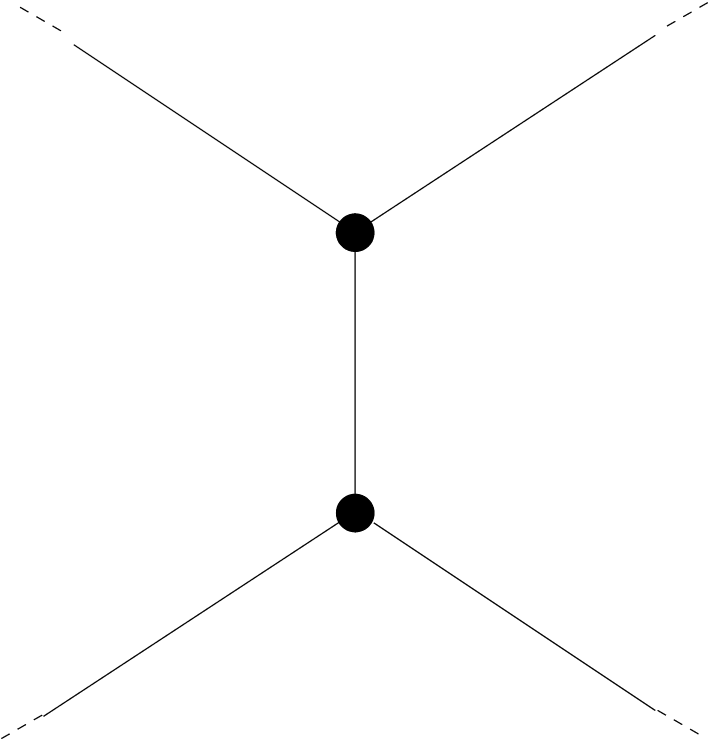}}\   -\ 
\raisebox{-1.6pc}{\includegraphics[scale=0.15]{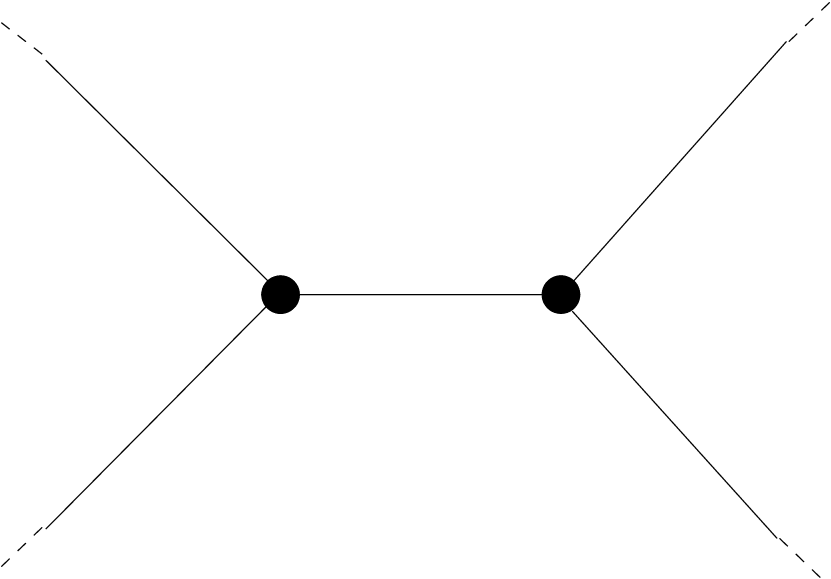}}\   +\  
\raisebox{-1.6pc}{\includegraphics[scale=0.15]{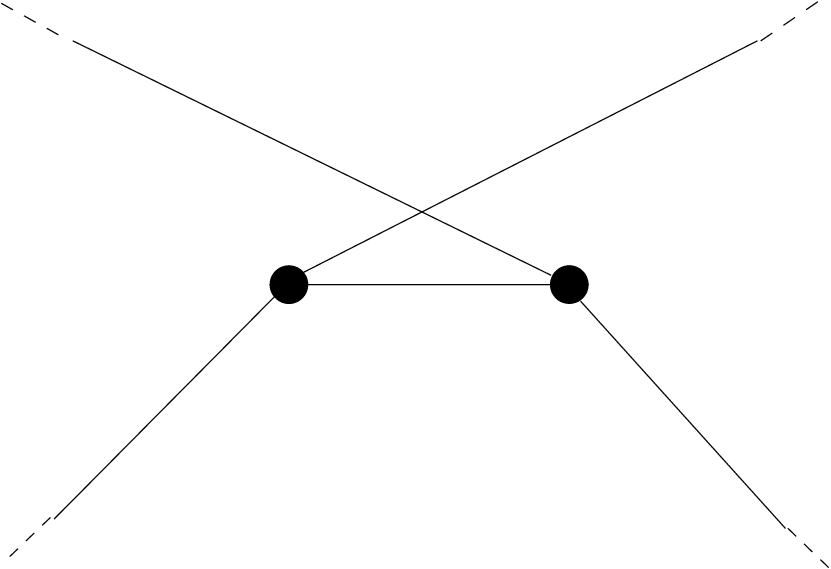}} = 0
\]
\caption{The IHX relation.}
\label{IHXFigure}
\end{figure}

In either case, the orientations on the diagrams are the Lie orientations given by the planar embeddings.

\begin{definition}
\label{WeightSystemsDef}
Define the space of \emph{weight systems} as $$\LW_n = (\LT_n^*/(STU, IHX))^* \subset \LT_n.$$  
\end{definition}

This space can be shown to coincide with the space of cocycles $Z(\LD_{0,n}) \subset \LT_n$ by considering the adjoint of the differential, i.e.~``blowing up'' a 4-valent vertex \cite[Proposition 3.29]{KMV}, \cite[Proposition 1]{ConantVogtmann:MoritaVanish}.


That is, the cocycles in $\LD_{0,n}$ are precisely the elements $\alpha$ which give 0 when paired with a diagram with a closed loop of edges, and which satisfy $\langle S-T+U, \alpha\rangle=0$ for all $S,T,U$ as in Figure \ref{STUFigure}.


\begin{proposition}\label{Aut=1}
For any homotopy link trivalent diagram $\Gamma$, $\mathrm{Aut}(\Gamma)=1$.
\end{proposition}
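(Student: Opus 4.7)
The plan is to show that any self-homeomorphism $\phi$ of $\Gamma$ which preserves the orientation and label of each segment must act trivially on the underlying combinatorial data. I will proceed in three steps: first showing that every segment vertex is fixed, then showing that the graph of non-segment edges is a forest, and finally invoking an elementary fact about automorphisms of forests.

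First, since $\phi$ preserves each segment $i$ setwise and orientation-preservingly, its restriction to segment $i$ is an orientation-preserving self-homeomorphism of $\R$, which is order-preserving. The finite set of segment vertices on segment $i$ is thus mapped bijectively to itself in an order-preserving way, so each segment vertex is fixed individually.

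Second, let $G$ denote the subgraph of $\Gamma$ consisting of all vertices together with only the edges (discarding the arcs of segments). A cycle in $G$ would be a nontrivial $1$-cycle in the $1$-complex $\Gamma$, because there are no $2$-cells to bound it; this would contradict the hypothesis that $\Gamma$ has trivial first homology. Hence $G$ is a forest. Furthermore, each free vertex has valence $\geq 3$ with no incident segment arcs, so its degree in $G$ is $\geq 3$; in particular, no free vertex is a leaf of $G$. Therefore every leaf of $G$ is a segment vertex, and by the first step all leaves of $G$ are fixed by $\phi$.

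Finally, I would invoke the following: an automorphism of a forest fixing every leaf is the identity. Given an internal vertex $v$ of a tree component of $G$, removing $v$ produces $\deg(v) \geq 2$ subtrees, each of which contains at least one leaf. Picking leaves $\ell_1, \ell_2$ in two distinct subtrees, the unique path between them passes through $v$. Since $\phi(\ell_i) = \ell_i$, $\phi$ sends this path to itself, and the only graph automorphism of a simple path fixing both of its endpoints is the identity; hence $v$ is fixed. Applied to each component of $G$, this shows $\phi$ fixes every vertex of $\Gamma$. Since edges are determined by their pairs of endpoints and arcs of segments are determined by the linear order of segment vertices on each segment, we conclude that $\phi$ is trivial on $\Gamma$, and $\mathrm{Aut}(\Gamma)=1$. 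The main subtlety is ensuring that trivial $H_1(\Gamma)$ really forces $G$ to be a forest, which reduces to noting that $\Gamma$ is purely one-dimensional; the remainder of the argument is combinatorial cleanup from the definitions.
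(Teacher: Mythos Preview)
Your proof is correct and follows essentially the same approach as the paper's. The paper's two-sentence argument notes that automorphisms fix all segment vertices and that, since homotopy link diagrams have no cycles, every free vertex lies on the unique edge-path between two segment vertices; you expand exactly this idea into a careful three-step argument, making explicit why the leaves of the edge-forest are segment vertices and why an automorphism of a tree fixing its leaves must be the identity.
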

\begin{proof}
An automorphism of diagrams must fix all segment vertices.  
Furthermore, a homotopy link trivalent diagram has no closed loops of edges (due to the STU relation and the fact that homotopy link diagrams have no paths joining vertices on the same segment).  Thus every remaining vertex appears on some path (possibly many) that is the unique path between two segment vertices.
\end{proof}

\begin{remarks}
\label{RelationVsCondition}\ 
\begin{enumerate}  
\item[(a)] As a consequence of Proposition \ref{Aut=1}, the pairing in \eqref{PairingWithAutFactors} above reduces to the Kronecker pairing on homotopy link diagrams $\Gamma \in \HD$.  
Then 
we can view a cocycle (or weight system) as a linear combination of diagrams such that, for every triple of diagrams $S,T,U$ as in Figure \ref{STUFigure} (and every triple $I,H,X$ as in Figure \ref{IHXFigure}), the coefficients of these three diagrams sum to zero.  
While the relations in the \emph{quotient} $\LT _n^*/(STU, IHX)$ are often referred to as the STU and IHX \emph{relations}, we will use the term STU and IHX \emph{conditions} to describe the restriction that determines the \emph{subspace} of cocycles in $\LT_n$.  

\item[(b)] The above proposition fails for diagrams in $\mathcal{LD}$ that are not homotopy link diagrams.  For example, the diagram below has a nontrivial automorphism.
\bigskip

\begin{center}
\includegraphics[scale=0.18]{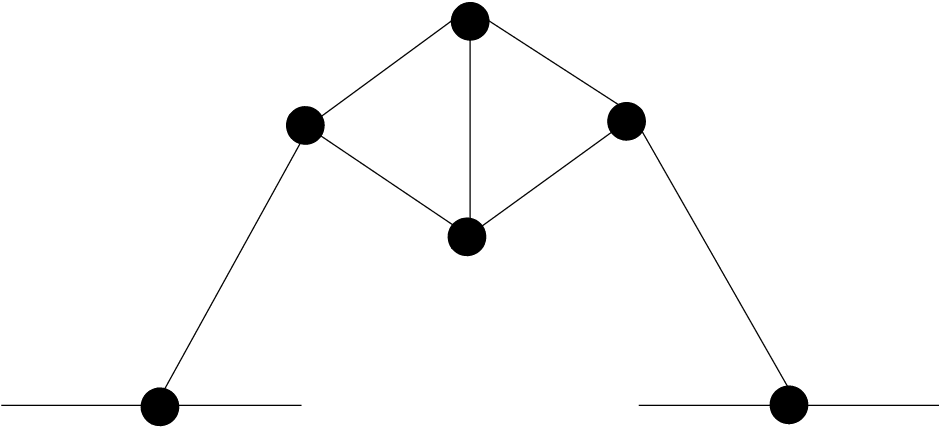}
\end{center}
\end{enumerate}
\end{remarks}

Considering the cochain map \eqref{CochainMapH} in the case of defect $k=0$ and ambient dimension $d=3$, we get a map 
\[
I_{\mathcal{H}}: \HW_n = Z(\HD_{0,n}) \longrightarrow \mathrm{H}^0(\mathrm{Link}(\coprod_{i=1}^m \R, \R^3))
\]
whose image lies in $\HV_n$.
Conversely, there is a canonical map\footnote{This canonical map is defined by viewing $\LW_n$ as $(\LC_n^*/4T)^*$ where $4T$ stands for the well known \emph{4-term relation}, and where $(\LC_n^*/4T)^* \cong (\LT_n^*/STU)^*$.  See \cite[Theorem 6]{BarNatanTopology} for a proof of the analogous isomorphism for knots.}
 $\HV_n \to \HW_n$ which factors as
\begin{equation}
\label{CanonicalMap}
\HV_n / \HV_{n-1} \longrightarrow \HW_n.
\end{equation}
It is not too difficult to show that \eqref{CanonicalMap} is well defined and injective.
\begin{theorem}[\cite{KMV}, Theorem 5.8]\label{FTFTI}
The composite of integration followed by the quotient 
\begin{equation}
\label{InverseInFTFTI}
\HW_n \overset{I_\mathcal{H}}{\longrightarrow} \HV_n \longrightarrow \HV_n / \HV_{n-1}
\end{equation}
is inverse to the canonical map \eqref{CanonicalMap}.
\end{theorem}
An analogous statement for finite type \emph{isotopy} invariants of links is true, provided one adds an anomaly correction term to $I_{\mathcal{L}}$.
Such an inverse map can also be obtained via the Kontsevich integral (see \cite{ChmutovDuzhin, BarNatanTopology} for the case of isotopy invariants and \cite[Proposition 10.4]{HabeggerMasbaum} for the special case of homotopy string link invariants).  


\subsubsection{The shuffle product}
There is a product on 
$\LD$
called the \emph{shuffle product}, defined as follows.  Given two diagrams $\Gamma_1$ and $\Gamma_2$ with $q_i$ and $r_i$ vertices on the $i$-th segment for $i=1,\ldots,m$, let $\sigma$ be an $m$-tuple of shuffles, that is, orderings of the $q_i+r_i$ vertices which respect the orderings in $\Gamma_1$ and $\Gamma_2$.  Such a $\sigma$ specifies a way of attaching the ``legs'' of $\Gamma_1$ and $\Gamma_2$ to the $m$ segments to form a new diagram $\Gamma_1 \bullet_\sigma \Gamma_2$.  
The integration orientation of $\Gamma_1 \bullet_\sigma \Gamma_2$ is given by leaving the labels of all the vertices from $\Gamma_1$ unchanged and raising by $|V(\Gamma_1)|$ the label of every vertex from $\Gamma_2$.  
One can check that this is equivalent to endowing $\Gamma_1 \bullet_\sigma \Gamma_2$ with the Lie orientation obtained by superimposing the planar diagrams for $\Gamma_1$ and $\Gamma_2$.  
For $d$ odd, the shuffle product is then defined as 
\[
\Gamma_1\bullet\Gamma_2 := \sum_\sigma \Gamma_1 \bullet_\sigma \Gamma_2.
\]
For $d$ even, an extra sign $\eps(\Gamma_1,\Gamma_2) \in \{\pm 1\}$ is needed.
%

\begin{proposition}
The shuffle product restricts to a product 
$$\HW_n \x \HW_p \to \HW_{n+p}.$$
\end{proposition}
\begin{proof}
This follows from the fact that the differential $\delta$ in the graph complex satisfies a Leibniz rule with respect to the shuffle product $\bullet$ (see Proposition 3.23 of \cite{KMV} or Proposition 3.1 of \cite{CCRL-JKTR} for the case of long knots).  Alternatively, one can use the description of defect zero cocycles (weight systems) in terms of the STU condition, as follows.  If (the coefficients of) $\alpha$ and $\beta$ satisfy every STU condition, then so does $\alpha \bullet \beta$: the shuffle product introduces no new edges of the sort in the $S$ diagram, while every new arc of the sort in the $T$ diagram is matched by a new arc of the sort in the $U$ diagram, and the two matching diagrams appear with the same sign.  
\end{proof}

The following lemma is an important ingredient in the proof of our main result, Theorem \ref{FiltrationIsomorphisms}.  For a proof, see for example, the proof of Proposition 4.29 in \cite{KMV}.
Recall the discussion of the product of finite type homotopy long link invariants from the end of Section \ref{FiniteType}.

\begin{lemma}
\label{ShuffleProductLemma}
Via the isomorphisms $\HW_n \cong \HV_n / \HV_{n-1}$ from Theorem \ref{FTFTI} -- see the maps \eqref{InverseInFTFTI} and \eqref{CanonicalMap} -- the shuffle product of diagrams corresponds to the product of finite type link invariants.
\qed
\end{lemma}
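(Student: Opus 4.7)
The approach is to exploit the fact, established in \cite[Section 4]{KMV} (cf.~\cite[Section 3.1]{CCRL-JKTR} for the case of knots), that the integration map $\mathcal{I}$ of \eqref{CochainMap} is a map of differential graded algebras, where the algebra structure on $\HD$ is the shuffle product $\bullet$ and that on $\Omega_{dR}^\ast \mathrm{Link}(\coprod_i \R,\R^d)$ is the wedge product. Since the ``inverse'' isomorphism \eqref{InverseInFTFTI} in Theorem \ref{FTFTI} is induced by $\mathcal{I}$, this multiplicativity is really all one needs; the body of the proof consists of checking that all the ingredients do line up in the form claimed.

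More concretely, let $\alpha_1 \in Z(\HD^0_r) \cong \HW_r^\ast$ and $\alpha_2 \in Z(\HD^0_s) \cong \HW_s^\ast$ be representative cocycles, and write $V_i := \mathcal{I}(\alpha_i) \in \HV_{n_i}$ (here $n_1=r$ and $n_2=s$), with classes $[V_i] \in \HV_{n_i}/\HV_{n_i-1}$. First I would verify that $\alpha_1 \bullet \alpha_2$ represents the desired class in $\HW_{r+s}^\ast$. The order is additive under the shuffle product because $\bullet_\sigma$ creates no new edges or free vertices, so $\alpha_1 \bullet \alpha_2 \in \HD^0_{r+s}$. Moreover, because $d$ is a derivation with respect to $\bullet$ (as recalled just before the lemma), one has
\[
d(\alpha_1 \bullet \alpha_2) \;=\; (d\alpha_1)\bullet\alpha_2 \;\pm\; \alpha_1 \bullet (d\alpha_2) \;=\; 0,
\]
so $\alpha_1 \bullet \alpha_2 \in Z(\HD^0_{r+s})$ as required.

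Next I would apply the algebra-map property of $\mathcal{I}$ in defect zero and ambient dimension three. Since the target of $\mathcal{I}$ in this case is $\Omega_{dR}^0 \mathrm{Link}(\coprod_i \R,\R^3)$, whose product is just the pointwise multiplication of functions on link space, multiplicativity of $\mathcal{I}$ reads
\[
\mathcal{I}(\alpha_1 \bullet \alpha_2)(L) \;=\; \mathcal{I}(\alpha_1)(L)\cdot \mathcal{I}(\alpha_2)(L) \;=\; V_1(L)\,V_2(L) \;=\; (V_1 V_2)(L)
\]
for every homotopy string link $L$. By Lemma \ref{ProductOfFiniteTypes}, $V_1 V_2 \in \HV_{r+s}$, so passing to $\HV_{r+s}/\HV_{r+s-1}$ gives $[V_1 V_2] = [\mathcal{I}(\alpha_1 \bullet \alpha_2)]$, which is exactly the statement that the class corresponding to $\alpha_1 \bullet \alpha_2$ under \eqref{InverseInFTFTI} is the product of the classes corresponding to $\alpha_1$ and $\alpha_2$. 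Dualizing, or equivalently combining with the inverse statement \eqref{CanonicalComposite}, yields the lemma.

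The main obstacle, and the one that is genuinely handled in \cite{KMV}, is the multiplicativity of $\mathcal{I}$ itself: one must check that the sum over strand shufflings in $\Gamma_1 \bullet \Gamma_2$ exactly matches the integration over the product of strand-configuration spaces of $\Gamma_1$ and $\Gamma_2$, with signs reconciled between the integration orientations and the wedge product of forms. Once that is in hand (as in the proof of Proposition 4.29 of \cite{KMV}), the present lemma is essentially a formal consequence, and the plan above is simply the organization of that consequence with respect to the canonical identifications of Theorem \ref{FTFTI}.
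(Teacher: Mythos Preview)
Your proposal is correct and is precisely the argument the paper has in mind: the paper does not give its own proof but refers to Proposition 4.29 of \cite{KMV}, whose content is exactly the multiplicativity of the integration map $\mathcal{I}$ (shuffle product to wedge product) that you invoke, together with the formal passage to the filtration quotients via Theorem \ref{FTFTI}. Your write-up is a faithful unpacking of that reference.
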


\begin{remark}
The shuffle product of link diagrams appears in earlier literature, such as \cite{CCRL-JKTR,Victor-Bialgebra,Victor-Pascal}.  
This statement seems to appear in a different guise in \cite[Exercise 3.10]{BarNatanTopology}.  There, invariants correspond to functionals on diagrams (weight systems), rather than the diagrams themselves, so the product of invariants corresponds to a product on such functionals, which is a \emph{coproduct} on diagrams.  For configuration space integral formulas, we find it more convenient to think in terms of products of diagrams. 
\end{remark}

\subsection{Configuration space integrals}
\label{Integrals}

We now review the generalization of the configuration space integrals of Bott and Taubes \cite{BottTaubes} to long links; this generalization appears in \cite{KMV}.  We will consider only \emph{embedded} long links, that is the space $\Emb(\coprod_1^m \R, \R^d)$, rather than $\mathrm{Link}(\coprod_1^m \R, \R^d)$.
This is because of the fact that, to study invariants of homotopy long links in $\R^3$, it suffices to pull back cohomology classes via the inclusion $\Emb \incl \mathrm{Link}$.  In fact,  $\mathcal{L}_m$ surjects onto $\mathcal{H}_m$, and the integration maps $I_{\mathcal{L}}$ and $I_{\mathcal{H}}$ agree when restricting the latter to homotopy diagrams $\HD$ and the former to embedded long links. 


Fix a trivalent diagram $\Gamma \in \LT$ with labeled vertices and oriented edges.  Recall from the discussion following Definition \ref{TrivalentDiagrams} that a Lie orientation on $\Gamma$ can be used to produce vertex labels and edge orientations (up to even permutations of the vertex labels and an even number of edge reversals).  Suppose $\Gamma$ has $q_i$ vertices on the $i$-th segment and $t$ free vertices.
Consider the pullback square
\begin{equation}\label{BTsquare}
 \xymatrix{
E[q_1,\ldots, q_m;t] \ar[r] \ar[d] & C_{q_1+\dotsb +q_m+t}[\R^d]\ar[d] \\
\Emb(\coprod_1^m \R, \R^d) \x  C_{q_1,\ldots,q_m} \left[\coprod_1^m \R\right]  \ar[r] & C_{q_1+\dotsb +q_m}[\R^d]}
\end{equation}
Here
\begin{itemize}
\item Each space $C_k[\R^d]$ is the Axelrod--Singer compactification of the configuration space of $k$ points in $\R^d$  (see e.g.~\cite[Section 4]{KMV}).
\item The right vertical map is given by forgetting the last $t$ points.  
\item The space $C_{q_1,\ldots,q_m} \left[\coprod_1^m \R\right]$ is a compactification of the configuration space of $q_1+\cdots+q_m$ points, $q_i$ of which are on the $i$-th copy of $\R$, which records all the relative rates of approach to infinity (see \cite[Section 4]{KMV}).  
\item The bottom horizontal map comes from the fact that an embedding of $X$ into $Y$ induces a map from configurations in $X$ to configurations in $Y$.  
\end{itemize}
The bundle
\[
E[q_1,\ldots,q_m;t] \longrightarrow \Emb(\coprod_1^m \R, \R^d)
\]
is given by the left vertical map in (\ref{BTsquare}) followed by projection onto the first factor.  Write $F=F[q_1,\ldots ,q_m;t]$ for its fiber, which is a compactified configuration space of $q_1+\cdots+q_m+t$ points in $\R^d$, $q_i$ of which lie on the $i$-th strand of the given link and $t$ of which can be anywhere in $\R^d$, including on the link.  This fiber is a (finite-dimensional) manifold with corners. 

There are maps
\[
\phi_{ij}:C_{q_1+\dotsb +q_m+t}[\R^d] \longrightarrow S^{d-1}
\]
given by the direction between the $i$-th and $j$-th configuration point.  
Via $\phi_{ij}$, one can pull back the volume form $\mathrm{sym}_{S^{d-1}}$ on $S^{d-1}$ to $C_{q_1+\dotsb +q_m+t}[\R^d]$ and then further to $E[q_1,\ldots, q_m;t]$.  Let $\theta_{ij}$ denote the resulting spherical form on $E[q_1,\ldots, q_m;t]$.  
For our fixed diagram $\Gamma$, take the product $\beta$ of all $\theta_{ij}$ such that vertices $i$ and $j$ are endpoints of an edge in $\Gamma$.  Then let $I_{\mathcal{L}}(\Gamma)$ be the integral of $\beta$ over  the fiber $F=F[q_1,\ldots,q_m;t]$ of the bundle above:
\begin{equation}\label{GeneralIntegral}
I_{\mathcal{L}}(\Gamma) = \int_{F}\beta = \int_{F[q_1,\ldots ,q_m;t]}\prod_{\text{edges $ij$ of $\Gamma$}}\theta_{ij}
\end{equation}

Thus $I_{\mathcal{L}}(\Gamma)$ is a differential form on $\Emb(\coprod_1^m \R, \R^d)$, of degree 0 in the case $d=3$.  
A linear combination of such forms is closed if and only if certain boundary contributions cancel (by Stokes' Theorem), and the such linear combinations are indexed precisely by cocycles in the graph complex. 
For further details, the reader may consult \cite[Section 4.4]{KMV}.

In \cite{KMV}, the authors showed how to extend configuration space integrals to homotopy long links, that is, link maps which are not necessarily embeddings.  
For details, see \cite[Section 4.2.3]{KMV}.  This setup for link maps reduces to the setup above in the case when the link map is an embedding.  
The modification of these integrals for homotopy long links provides the inverse in the link homotopy version of the Fundamental Theorem of Finite Type Invariants, given in Theorem  \ref{FTFTI}.


\subsection{Milnor's link homotopy invariants}\label{MilnorInvs}
For each tuple $(i_1, \ldots , i_r, j)$ with $i_1, \ldots , i_r, j \in \{1,\ldots ,m\}$, there is a Milnor invariant $\mu_{i_1,\ldots ,i_r, j}$.  Provided all the indices $(i_1,\ldots ,i_r,j)$ are distinct, $\mu_{i_1,\ldots ,i_r, j}$ is a link homotopy invariant of $m$-component string links \cite{Milnor1954}. 
The invariant $\mu_{i_1,\ldots ,i_r, j}$ is defined as follows.  The fundamental group $\pi_L$ of the complement of the link $L$ is generated by the meridians $M_1, \dots, M_m$ of the components and their conjugates.  The quotient by the  $q$-th term $\pi_L^{(q)}$ in the lower central series of $\pi_L$ is generated by the $M_i$ themselves \cite{MilnorIsotopy}.  So in this quotient, the $j$-th longitude $\ell_j$ can be written as a word $w_j$ in the $M_i$.  One then considers the Magnus expansion, which is a homomorphism from the free group $F\langle M_1, \dots, M_m\rangle$ to the ring  $\Z \langle \langle t_1, \ldots, t_m \rangle\rangle$ of power series in non-commuting variables $t_1,\ldots ,t_m$.  This map sends $M_i$ to $1+t_i$ and $m_i^{-1}$ to $1-t_i +t_i^2 -\cdots$.  Finally, one considers the image of $w_j$ under this map and takes $\mu_{i_1,\ldots ,i_r, j}$ to be the coefficient of $t_{i_1} \cdots t_{i_r}$ in this power series.  This number is well defined, provided one works modulo $\pi_L^{(q)}$ with $q\geq r$.\footnote{In fact, one can also define the Milnor invariants as elements dual to the Lie algebra of the lower central series of the link group.}
In the setting of closed links, as in Milnor's original work, the construction is similar, except that there is a choice for each meridian which may affect the result, and thus
each invariant is defined only up to the gcd of the lower-order invariants.
For string links, however, there is no indeterminacy, so the invariants are well defined integers.
For $m$-component links in either setting, the dimension of the space of Milnor link homotopy invariants of length $m$ (i.e.~with $m$ indices) is $(m-2)!$ \cite{Milnor1954}.

The invariant $\mu_{i_1,\ldots ,i_r, j}$ is finite type of order $r$, one less than the length of the invariant.  There are several proofs in the literature \cite{BarNatanJKTR, MellorMilnorWeight}.
From now on, we will use the term ``Milnor invariant'' to refer to Milnor invariants with distinct indices, i.e.~the link homotopy invariants.
The simplest example is the case $r=1$, where $\mu_{i;j}$ is the pairwise linking number of the $i$-th and $j$-th strands, an invariant of finite type 1.  The next example is that of the \emph{triple linking number} $\mu_{i_1, i_2;j}$
which has been the subject of much investigation, especially in recent years \cite{MunsonAGT, MunsonJTop, DGKMSV1, DGKMSV2, MellorMelvin}. 
We will revisit both of these invariants
in Section \ref{Examples} from the configuration space integral point of view.

Each invariant $\mu_{i_1,\ldots ,i_r; j}$ is also an invariant of the $(r+1)$-component sublink determined by the strands labeled $i_1,\ldots ,i_r,j$.  Thus to study a given Milnor invariant of type $n$, we may restrict to type $n$ invariants of $(n+1)$-component string links, and from now on we assume $m=n+1$.

\section{Examples in low degrees}
\label{Examples}

Before proving our theorem, we will illustrate it with examples in the several lowest orders.  The idea is 
as follows: Theorem \ref{FiltrationIsomorphisms} will recover the correspondence between Milnor invariants and linear combinations of trivalent trees.  Given a Milnor invariant, such a linear combination of trees is part of the integral formula for that invariant.  The STU conditions are not quite enough to determine the rest of the formula, but they are enough to determine the formula up to products of lower-order invariants.  (The products of lower-order invariants correspond to cocycles in the graph complex which can be indexed by linear combinations of \emph{forests}.)

\subsection{The pairwise linking number}
The only type 1 invariant (of unframed long links) is the pairwise linking number.  This invariant corresponds to a diagram with one chord with one endpoint on each of two segments.  Thus, at this order, the correspondence between link homotopy invariants and trivalent trees is clear.  The linking number can then be written as the integral associated to this one diagram, and this is essentially the Gauss linking integral (though for long links instead of closed links).  That is, the linking number of strands $i$ and $j$ of a link $L$ is
$$
\operatorname{lk}(L_i,L_j)=(I_{\mathcal{H}})_{\Gamma}(L)=(I_{\mathcal{L}})_{\Gamma}(L)=\int\limits_{C[1,1; \R\sqcup \R]} \left(\frac{L(x)-L(y)}{|L(x)-L(y)|}\right)^* \omega
$$
where $\omega$ is the volume form on $S^2$ and $C[1,1;\R\sqcup\R]$ is the compactified configuration space of two points, one on each copy of $\R$; this compactified configuration space is just an octagonal disk (see \cite[Section 1.2]{KoytcheffAGT2014} for details).

\subsection{The triple linking number}
\label{TripleLinking}
For our discussion of invariants of type 2 or higher, it will be convenient to recall Lemmas \ref{ProductOfFiniteTypes} and \ref{ShuffleProductLemma}, which respectively concern products of finite type invariants and products of graph cocycles.
Recall also from Section \ref{MilnorInvs} that the dimension of the space of Milnor (link homotopy) invariants of $n$-component links is $(n-2)!$ and that any such invariant is of type $n-1$.
Thus at finite type 2, the space of Milnor invariants of 3-component links is one-dimensional,
and we call a generator the triple linking number $\mu_{123}$.  Recall also that type $n$ invariants correspond to diagrams with $2n$ vertices.
It is easy to see that the space of trivalent trees on $2\cdot 2=4$ vertices with leaves on distinct segments (modulo the IHX relation)
also has one generator.  It is the tripod $\tau$, 
pictured below:
\[
\includegraphics[height=2.5pc]{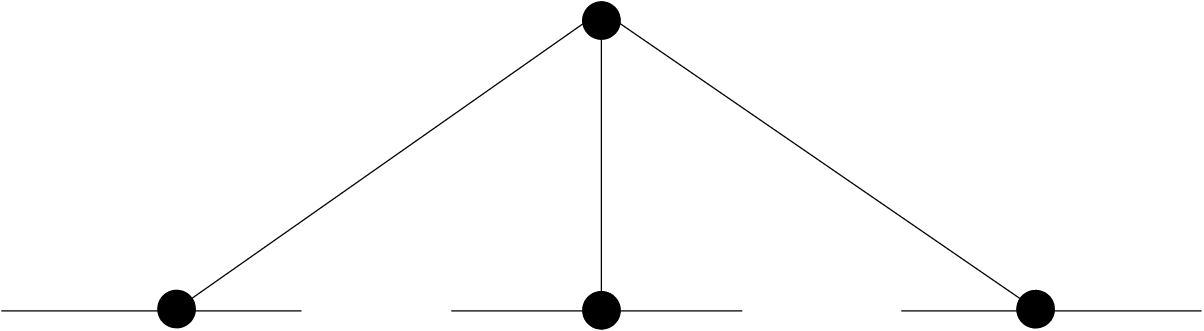}
\]
Based on the correspondence of Habegger and Masbaum between Milnor invariants and such trivalent trees, one could na\"{i}vely expect the triple linking number to be given by the integral associated to $\tau$.  However, the graph cocycle $\alpha_{123}$ for $\mu_{123}$ contains some chord diagrams as well.  Thus the configuration space integral for $\mu_{123}$ has more terms than just the integral associated to $\tau$.  
Mellor established a recursive formula \cite[Theorem 2]{MellorMilnorWeight} for calculating the weight system of any Milnor invariant.
In \cite{KoytcheffAGT2014}, the first author used this formula to calculate the values of the weight system $W_{123}$ for $\mu_{123}$, or equivalently the graph cocycle $\alpha_{123}$.  
A priori, there are, in addition to $\tau$, six link homotopy diagrams that touch all three segments and which one might thus expect to appear in $\alpha_{123}$.
They are the diagrams $L,M,R$, \\
\[
\includegraphics[height=1.6pc]{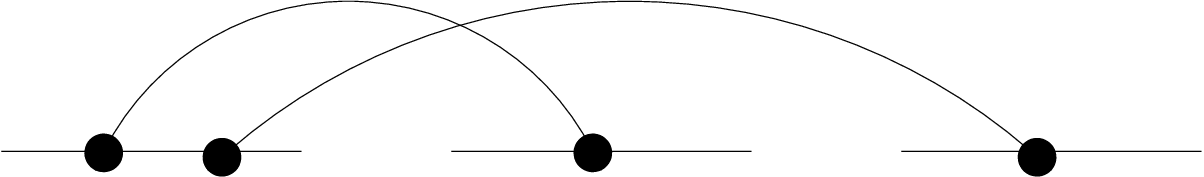},\ 
\includegraphics[height=1.6pc]{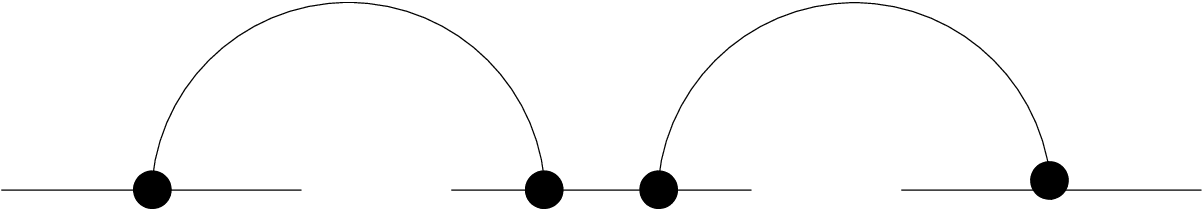},\ 
\includegraphics[height=1.6pc]{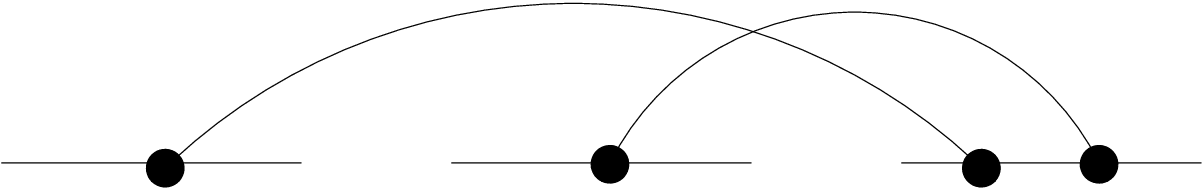}
\]

and the diagrams $L',M',R'$, \\
\[
\includegraphics[height=2.6pc]{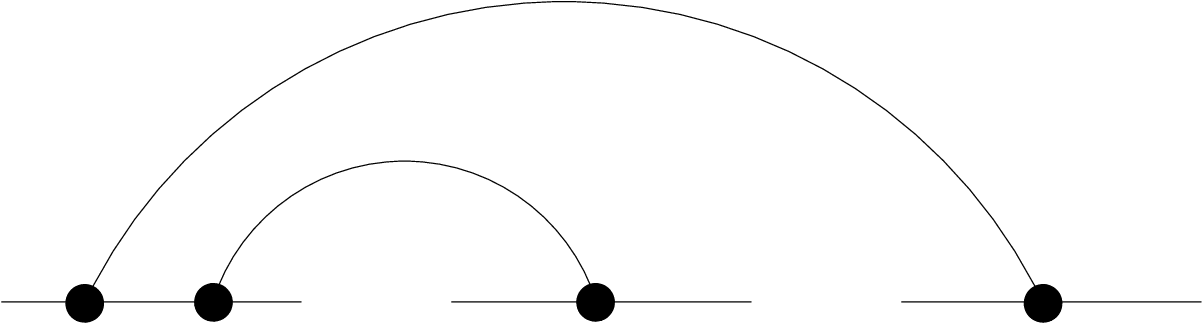},\ 
\includegraphics[height=1.6pc]{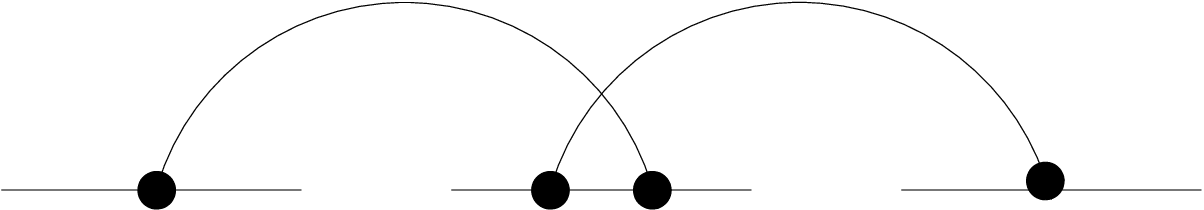},\ 
\includegraphics[height=2.6pc]{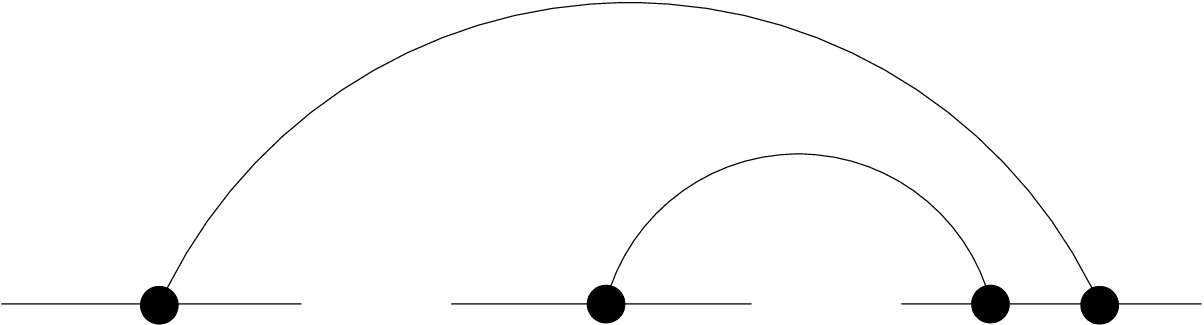}.
\]
Applying Mellor's formula shows that $\alpha_{123}=L-M+R-\tau$ \cite[Section 3.1]{KoytcheffAGT2014}.  (One can also use Mellor's formula to check that no other (chord) diagrams of order 2 appear in $\alpha_{123}$.)

Thus in terms of configuration space integrals,
\begin{equation}
\label{FormulaForMu123}
\mu_{123} =  I_L - I_M + I_R - I_\tau 
\end{equation}
where the terms $I_\Gamma$ above are the integrals over compactified configuration spaces associated to the diagrams $\Gamma=L,M,R,\tau$ as in \eqref{GeneralIntegral}:
\footnote{Erratum to \cite{KoytcheffAGT2014}:
The minus signs above come from the correspondence between Lie orientations and ``integration orientations.''  In \cite{KoytcheffAGT2014}, these minus signs were missing.  This was due to an erroneous identification of Lie orientations with integration orientations in the paragraph before Remark 3.1 of that paper.  
(The signs in the two versions of the STU relation in that paper are correct, as are the calculated values of $W_{123}$ on the Lie-oriented diagrams.) 
}
\begin{align*}
I_L = -\int_{F[2,1,1;0]} \theta_{13}\theta_{24} & &
I_M= \int_{F[1,2,1;0]} \theta_{12}\theta_{34}\\
I_R= -\int_{F[1,1,2;0]} \theta_{13}\theta_{24} & &
I_\tau= -\int_{F[1,1,1;1]} \theta_{14}\theta_{24}\theta_{34}
\end{align*}

From the geometry of configuration space integrals, it makes sense that this is the formula for the triple linking number.  Namely, the configuration space captured schematically by $\tau$ -- four points in $\R^3$, three of which are constrained to lie on the three strands of the long link -- has three boundary components given by the free point colliding with the points on the link strands.  There is no reason for the integral $I_\tau$ over these boundary components to vanish, so the integral is not necessarily an invariant.  However, each of the spaces corresponding to diagrams $L$, $M$, and $R$ also has a boundary component given by the collision of two points on the same strand.  The restrictions of the integrals $I_L$, $I_M$, and $I_R$ to those boundaries thus precisely cancel the three boundary contributions from $I_\tau$ in the sum above.

A further evidence that it is necessary to consider all four integrals is given by the fact that it is not hard to find a long link (even one with zero pairwise linking numbers) for which at least one of the integrals associated to $L,M,R$ does not vanish.  So the configuration space integral for $\mu_{123}$ is not merely $I_\tau$ or $-I_\tau$ even in special cases.  Nor do the STU conditions give the coefficients of the other diagrams from the coefficient of $\tau$.  However, one can ultimately conclude in this low-order example that the coefficient of $\tau$ together with the STU condition \emph{does} determine the coefficients of $L,M,R,L',M',R'$ \emph{up to linear combinations of} $\{L+L', M+M', R+R'\}$.  By Lemma \ref{ShuffleProductLemma}, these three elements correspond (up to sign) to the products of pairwise linking numbers.  This is the first nontrivial example of our more general main result, and the ``up to products of pairwise linking numbers'' caveat is a good indicator of the type of statement we will establish in Section \ref{Results}.

Contrast formula (\ref{FormulaForMu123}) with the formula of Habegger and Masbaum \cite{HabeggerMasbaum}, which says that for a link with no pairwise linking, $\mu_{123}$ is given by just the Kontsevich integral for the tripod, i.e.~the coefficient of $\tau$ in the Kontsevich integral.  This discrepancy is likely due to the fact that they consider $\tau$ as an element in the \emph{quotient} of the diagram space by the STU relations.  
One could dualize to the corresponding ``STU subspace'' of graph cocycles (i.e.~find the orthogonal complement to the STU relations) and ultimately reach the conclusion above regarding the possible diagrams in this example of $\alpha_{123}$.  
However, the dualizing isomorphism is not canonical, and for the general case it is easier to work with the graph cocycles directly.

\subsection{The quadruple linking numbers}
\label{QuadrupleLinking}
At finite type 3, Mellor's formula reveals that the graph cocycle $\alpha_{1234}$ for the invariant $\mu_{1234}$ has 24 terms, including 13 of the 72 chord diagrams,
9 of the 24 diagrams with one free vertex, and 2 of the 3 four-leaved trivalent trees.  This points to the potential difficulty of quickly finding the integral formula for an arbitrary Milnor invariant.  
Note however that there are only 3 isomorphism classes of trivalent trees with four leaves, distinctly labeled by 1, 2, 3, 4.  The IHX relation says that the (signed) sum of these is zero, so the dimension of the space of these trees modulo IHX is 2.  This is also the number of type 3 Milnor invariants of 4-component long links.  Our main result will say that a set of coefficients of these trees which satisfy the IHX condition almost determines a sum of type 3 Milnor invariants; in fact, these coefficients determine it up to adding type 2 invariants, namely triple linking numbers and products of pairwise linking numbers.

\section{Main Results}
\label{Results}
To obtain our main results, we start by filtering the spaces of weight systems by connected components in Section \ref{FilterWSec}.  This yields our first main result, namely a decomposition of these spaces in terms of the shuffle product.  We specialize to homotopy invariants in Section \ref{HomotopyWSec}.  We define similar filtrations on the space of invariants using products of Milnor invariants in Section \ref{FilterHVSec}, and we tie these together in Section \ref{FiltrationIsoSection} to obtain our remaining main results.  Finally we extend to cohomology of homotopy links in higher-dimensional space in Section \ref{Cohomology}.

\subsection{The shuffle product and filtrations of weight systems for finite type link invariants}
\label{FilterWSec}


By a \emph{connected} diagram we mean a diagram which is connected after removing the link strands.  By a \emph{connected component} of a diagram, we similarly mean a component of the result of removing the link strands.

\begin{definition}
\label{FilterByComponentsDef}
Let $\LW^k_n$ (resp.~$\HW^k_n$) be the subspace of $\LW_n$ (resp.~$\HW_n$) spanned by diagrams with 
at least $k$ connected components.
\end{definition}

It is clear that 
\[
\{0\} = \LW^{n+1}_n \subset \LW^n_n \subset \cdots \subset \LW^1_n = \LW
\]
and that the shuffle product respects this filtration in the sense that 
$$\bullet: \LW^k_n \otimes \LW^\ell_p \longrightarrow \LW^{k+\ell}_{n+p}.$$


\begin{definitions}
\label{ConnectedIHXDef}
Let $\mathcal{J} \subset \LD_{0,\ast}$ be the subspace spanned by linear combinations of connected trivalent diagrams 
which satisfy the IHX conditions.  
(Note that the terms $I,H,X$ in each condition all have the same number of components.)  
Denote by $S(\mathcal{J})$ the symmetric algebra (i.e.~polynomial algebra) on $\mathcal{J}$, and view a product therein as a disjoint union of connected components.
Let $S^k(\mathcal{J})_n \subset S^k(\mathcal{J})$ be the subspace of elements (with $k$ components) of order $n$, i.e.~$2n$ vertices in total.  
\end{definitions}

Clearly the multiplication in $S(\mathcal{J})$ restricts to $$S^k(\mathcal{J})_n \otimes S^\ell(\mathcal{J})_p \longrightarrow S^{k+\ell}(\mathcal{J})_{n+p}.$$  

A cocycle $\alpha \in \LW_n$ can be written as a linear combination of diagrams with various numbers of components; call the diagrams with the fewest components the \emph{leading} terms of $\alpha$.
Note that these leading terms by themselves are generally not cocycles.

This is the main result of this subsection:
\begin{theorem}
\label{LWSubalgebra}
Taking the leading terms of a cocycle defines injective maps  
$$\LW^k_n / \LW^{k+1}_n \incl S^k(\mathcal{J})_n.$$  
Moreover, these induce an injective algebra map $\LW \incl S(\mathcal{J})$ which takes the shuffle product on $\LW$ to multiplication in $S(\mathcal{J})$.
\end{theorem}

\begin{remarks} \
\begin{enumerate}
\item  The map $\LW \incl S(\mathcal{J})$ does not encode the differential structure, and elements in $\mathcal{J}$ are generally not graph cocycles.  For example, as mentioned in Section \ref{Examples}, the cocycle for the triple linking number is $\alpha_{123}:=L-M+R-\tau$.  Its leading term $-\tau$ (the tripod) is in $\mathcal{J}$, but $\alpha_{123}$ is neither an element of $\mathcal{J}$ nor a shuffle product of such elements.
\item  This map will depend on the choices (for various $n$) of splittings $$\LW_n \cong \LW^n_n \oplus \LW^{n-1}_n / \LW^n_n \oplus \cdots \oplus \LW^1_n / \LW^2_n.$$
\item  This map is not surjective because we have not defined $\mathcal{J}$ to take into account STU conditions involving connected diagrams.  For example, at order 3, there are combinations of trees satisfying IHX (and thus in $\mathcal{J}_3$), but there are no weight systems for long knots whose leading terms are these elements of $\mathcal{J}$ (see \cite[Section 5.1, Figure 3]{CCRL-AGT}).  However, the IHX conditions suffice for our analysis of homotopy invariants.
\item
In Section \ref{FiltrationIsoSection} we will see that $\HW$ maps isomorphically onto the polynomial algebra generated by the homotopy link diagrams in $\mathcal{J}$, which can be viewed as trees with distinctly labeled leaves.  
(The splitting of $\HW$, and hence this isomorphism, will also be fixed by using the Milnor invariants.)
Based on the results of \cite{HabeggerMasbaum}, we suspect that similar results can be obtained for finite type concordance invariants or finite type invariants of pure braids using trees with some repetition allowed among the leaf-labels.  
\item
In a previous draft, we used a filtration by the number of free vertices.  While (up to reindexing) that filtration coincides with the one by components for $\HW$, the filtration by free vertices is finer for $\LW$.  Nonetheless, it would not strengthen our main results for the algebras as a whole.
\end{enumerate}
\end{remarks}


We need a few lemmas to prove Theorem \ref{LWSubalgebra}.  

\begin{lemma}
\label{NothingButShuffleProducts}
For any cocycle $\alpha \in \LW_n$, the leading terms of $\alpha$ are in the span of shuffle products of connected diagrams. 
\end{lemma}

\begin{proof}
Let $\alpha = \sum a_i \Gamma_i$ and let $k$ be the minimum number of components among the $\Gamma_i$.  That is, $k$ is the filtration of the leading terms of $\alpha$.  
We will partition the $\Gamma_i$ with $k$ components according to their underlying unitrivalent diagrams.  For such a diagram $\Gamma$, let $\mathcal{U}(\Gamma)$ be the set of all diagrams with the same underlying unitrivalent diagram as $\Gamma$.  Notice that $\mathcal{U}(\Gamma)$ contains all the diagrams which appear in the shuffle product of the connected components of $\Gamma$.  

Next construct for each $\mathcal{U}(\Gamma)$ an auxiliary ``STU graph,'' with a vertex for each diagram $\Gamma' \in \mathcal{U}(\Gamma)$ and an edge between $\Gamma'$ and $\Gamma''$ when there exists an STU condition involving these two diagrams.  (Since $\Gamma'$ and  $\Gamma''$ have the same number of free vertices, they must be the diagrams $T$ and $U$.)  Notice that for each $\mathcal{U}(\Gamma)$, this STU graph is connected; in fact, any two diagrams in the same equivalence class are related by a sequence of moves, where each move swaps adjacent legs (leaves) on the same segment, and in each such move, the two diagrams are $T$ and $U$ in some STU condition.  As a result, STU together with the fact that $\alpha$ has no diagrams with less than $k$ components imply that for each $\mathcal{U}(\Gamma)$, the coefficients of \emph{all} the diagrams in $\mathcal{U}(\Gamma)$ are determined by any one of them.  Thus the leading terms of $\alpha$ must be shuffle products of connected diagrams.

(In fact, the coefficients of the diagrams in each $\mathcal{U}(\Gamma)$ are all equal, since for any diagrams $T$ and $U$ as in the STU condition, $|\Aut(T)|=|\Aut(U)|$; thus they match the coefficients of the shuffle product of the connected components of $\Gamma$.  Alternatively -- without explicitly establishing the relationship between the coefficients -- the above argument shows that the space of coefficients for the various shuffles of the components of $\Gamma$ is one-dimensional, so up to a scalar multiple, the coefficients appearing in the shuffle product are the only possibility.)
\end{proof}

This proof breaks down for cocycles of defect greater than zero because the analogues of the STU conditions for such cocycles involve more than just swaps of segment vertices.

\begin{lemma}
\label{ConnectedCocyclesIHX}
For any cocycle $\alpha\in \LW_n$, the leading terms of $\alpha$ are in $S(\mathcal{J})$, the span of shuffle products of linear combinations of connected diagrams satisfying the IHX conditions.
\end{lemma}

We will prove this via the following
basic linear algebra fact:

\begin{lemma}
\label{SymmetricPowerLemma}
Let $V$ be a finite-dimensional 
vector space with a basis
and the resulting inner product.  Let $U$ be a subspace.  For $m\geq 1$, equip the $k$-fold symmetric power $S^k(V)$ with the inner product associated to the basis for $S^k(V)$ given by monomials in the basis for $V$.  Let $(U)$ denote the 
subspace in $S^k(V)$ generated by $U$, i.e.~the image of $U\otimes V^{\otimes(k-1)}$ in $S^k(V)$.  Then 
\[
(U)^\perp \cong S^k(U^\perp).
\]
\end{lemma}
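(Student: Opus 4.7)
The plan is to prove the isomorphism by a dimension count. The key step is to exhibit a vector space decomposition of $S^m(V)$ in which the ideal $(U)$ is one summand and $S^m(U^\perp)$ is the complementary summand; the conclusion will then follow because any complementary subspace to $(U)$ in $S^m(V)$ must have the same dimension as $(U)^\perp$.

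First, using the orthogonal decomposition $V = U \oplus U^\perp$, I would invoke the standard identity
\[
S^m(V) \;\cong\; \bigoplus_{i+j = m} S^i(U) \otimes S^j(U^\perp)
\]
for the symmetric power of a direct sum of vector spaces. Here the isomorphism sends a symmetric monomial $u_1 \cdots u_i\, w_1 \cdots w_j$ with $u_a \in U$ and $w_b \in U^\perp$ to the corresponding simple tensor on the right.

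Second, I would identify $(U)$ with the subsum of terms having $i \geq 1$. Every element of the generating set for $(U)$ is of the form $u \cdot v_2 \cdots v_m$ with $u \in U$ and $v_k \in V$; decomposing each $v_k$ according to $V = U \oplus U^\perp$ and expanding shows this lies in $\bigoplus_{i \geq 1} S^i(U) \otimes S^{m-i}(U^\perp)$. Conversely, any monomial $u_1 \cdots u_i\, w_1 \cdots w_{m-i}$ with $i \geq 1$ visibly belongs to the ideal generated by $U$. Consequently $S^m(V) = (U) \oplus S^m(U^\perp)$ as vector spaces, with $S^m(U^\perp)$ appearing as the $i = 0$ summand.

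From this direct sum decomposition we get $\dim(U) + \dim S^m(U^\perp) = \dim S^m(V)$, while by definition of the orthogonal complement $\dim(U) + \dim(U)^\perp = \dim S^m(V)$. Comparing yields $\dim(U)^\perp = \dim S^m(U^\perp)$, so the two are isomorphic as abstract vector spaces. The main obstacle I anticipate is merely a bookkeeping caveat: $(U)^\perp$ and $S^m(U^\perp)$ are generally \emph{not} the same subspace of $S^m(V)$ (one sees this already in $V = \R^2$ with $U$ a line not spanned by a basis vector, since the inner product on $S^m(V)$ is defined via the given monomial basis rather than by an orthogonal change of basis), so the isomorphism here is only an abstract one of vector spaces, which is all that is needed in the subsequent applications.
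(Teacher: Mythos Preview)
Your proof is correct and follows exactly the same approach as the paper, which also establishes the direct sum decomposition $S^m(V) = (U) \oplus S^m(U^\perp)$ from $V = U \oplus U^\perp$; you simply supply more detail than the paper's one-line argument. Your closing caveat that $(U)^\perp$ and $S^m(U^\perp)$ need not coincide as subspaces (only as abstract vector spaces) is a correct and worthwhile clarification that the paper does not make explicit.
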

\begin{proof}
By writing $V= U \oplus U^\perp$, it is easy to see that $S^k(V) = (U) \oplus S^k(U^\perp)$.
\end{proof}

\begin{proof}[Proof of Lemma \ref{ConnectedCocyclesIHX}]
Let $\mathcal{C}$ be the span of all connected diagrams.  Let $U$ denote the subspace of IHX combinations in $\mathcal{C}$, so that $\mathcal{J} = U^\perp \subset \mathcal{C}$.  By Lemma \ref{NothingButShuffleProducts}, the leading terms of a cocycle $\alpha \in \LW$ are contained in $S^k(\mathcal{C})$ for some $k$, where a polynomial in elements of $\mathcal{C}$ is identified with the corresponding shuffle product of diagrams.  The fact that a graph cocycle satisfies all IHX conditions (see Section \ref{DescribingCocycles}) means that the leading terms of $\alpha$ are contained in $(U)^\perp \subset S(\mathcal{C})$.  By Lemma \ref{SymmetricPowerLemma}, $(U)^\perp = S(\mathcal{J})$.
\end{proof}

\begin{proof}[Proof of Theorem \ref{LWSubalgebra}]
Define a map 
$$\LW^k_n / \LW^{k+1}_n \longrightarrow S(\mathcal{J})^k_n$$ by sending a graph cocycle to its terms with exactly $k$ components.  From Lemma \ref{ConnectedCocyclesIHX} and the definitions of the spaces, it is clear that this map is well defined, linear, and injective.  This proves the first statement of Theorem \ref{LWSubalgebra}.

For the second statement, note that the shuffle product descends to the filtration quotients:
$$\bullet\colon \LW^k_n / \LW^{k+1}_n \otimes \LW^\ell_p / \LW^{\ell+1}_p \longrightarrow\LW^{k+\ell}_{n+p} / \LW^{k+\ell +1}_{n+p}.$$
As noted earlier, the product in $S(\mathcal{J})$ restricts to $$S^k(\mathcal{J})_n \otimes S^\ell(\mathcal{J})_p \to S^{k+\ell}(\mathcal{J})_{n+p}.$$ 

Canonically, $S(\mathcal{J}) \cong \bigoplus_{k,n} S^k(\mathcal{J})_n$.  
By choosing a splitting of $\LW$ into summands of various orders $n$ and filtrations $k$, we obtain a map $\LW \to S(\mathcal{J})$.
Since the leading terms of a shuffle product $\alpha \bullet \beta$ are the shuffle product of the leading terms of $\alpha$ and $\beta$, this map is an algebra homomorphism.  
\end{proof}

\subsection{Specializing to homotopy weight systems}
\label{HomotopyWSec}
Recall that $\HW_n \subset \LW_n$ is the subspace of cocycles where every diagram has no closed paths.  Thus a diagram in $\HW_n$ can be viewed as a (Lie-oriented) trivalent forest with labeled leaves, where segment vertices correspond to leaves, free vertices correspond to trivalent vertices, and labels correspond to link strands.  The reader may easily verify the following:

\begin{lemma}
\label{FreeVerticesInTrivalentForest}
A trivalent tree on $2n$ vertices has exactly $n-1$ trivalent vertices.  
\qed
\end{lemma}

As with $\LW$, we have
$$\{0\} = \HW^{n+1}_n \subset \HW^n_n \subset \HW^{n-1}_n \subset \cdots \subset \HW^{1}_n = \HW_n.$$  

Let $\mathcal{T} \subset \HD$ be the intersection of $\mathcal{J} \subset \LD$ with $\HD$.  Thus $\mathcal{T}$ is the subspace of linear combinations of diagrams of (Lie-oriented) \emph{trees} with \emph{distinctly} labeled leaves, which satisfy the IHX conditions.  

Define $S^k(\mathcal{T})_n$ as the subspace of the $k$-th symmetric power $S^k(\mathcal{T})$ spanned by order-$n$ products, i.e.~forests with $2n$ vertices and $k$ components.  
The restriction of the map of Theorem \ref{LWSubalgebra} to $\HW$ clearly lands in $S(\mathcal{T})$:

\begin{corollary}
\label{ShuffleProductsSpanHW}
The space $\HW^k_n / \HW^{k+1}_n$ injects into the space of linear combinations of forests with $k$ components which satisfy the IHX conditions:
$$\HW^k_n / \HW^{k+1}_n \longrightarrow S^k(\mathcal{T})_n.$$
There is an injective map of algebras $\HW \to S(\mathcal{T})$.
\qed 
\end{corollary} 

We will show in Section \ref{FiltrationIsoSection} that these maps are in fact isomorphisms.

\begin{remark}
As noted for Theorem \ref{LWSubalgebra}, these maps are given by taking the leading terms, and the leading terms alone are in general not cocycles.
\end{remark}

In particular Corollary \ref{ShuffleProductsSpanHW} implies that $\HW_n / \HW^{2}_n = \HW^{1}_n / \HW^{2}_n$ maps to $\mathcal{T}$ itself.  Its image is contained in the subspace $\mathcal{T}_n \subset \mathcal{T}$ of order-$n$ elements, which are by Lemma \ref{FreeVerticesInTrivalentForest} certain linear combinations of (Lie-oriented) trees with $n+1$ distinctly labeled leaves.  As mentioned in Section \ref{MilnorInvs}, it suffices to consider links of $n+1$ components to study type $n$ Milnor invariants; in this case the labeling is a bijection with $\{1,\ldots ,n+1\}$.

The next Lemma can be verified by considering the dual space of trees \emph{modulo} the IHX relation, or equivalently a space of Lie bracket expressions modulo the Jacobi identity.  See for example \cite[Lemma 1.10]{Sinha-Graphs-Trees} or \cite[Theorem 3]{BarNatanJKTR}.

\begin{lemma}
\label{TrivalentTrees}
In the setting of links of $n+1$ components, the dimension of $\mathcal{T}_n$ is $(n-1)!$.
\qed
\end{lemma}

\subsection{Filtering homotopy link invariants}
\label{FilterHVSec}
Recall that $\HV_n$ is the vector space of type $n$ link homotopy invariants of long links on $n+1$ strands.  This space contains type $n$ Milnor invariants, as well as products of lower-order invariants, as guaranteed by Lemma \ref{ProductOfFiniteTypes}.  We now define subspaces of $\HV_n$ which will correspond to our filtration stages of $\HW_n$.  

\begin{definition}
For any $k=1,\ldots ,n$, let $\HV^k_n$ be the subspace of $\HV_n / \HV_{n-1}$ spanned by 
(residues of) products of at least $k$ Milnor invariants.  (Here we do not consider the type 0 constant invariant to be a Milnor invariant.)
\end{definition}

Since $\HV^k_n$ is a subspace of $\HV_n / \HV_{n-1}$, we may take the generating elements to be type $n$ products of at least $k$ Milnor invariants.
So if $n_1,\ldots ,n_{\ell}$ are the orders of the $\ell$ invariants in such a product (with $\ell \geq k$), then Lemma \ref{ProductOfFiniteTypes} implies $n_1+\cdots+n_\ell=n$.  
Thus 
$$\{0\} = \HV^{n+1}_n \subset \HV^n_n \subset \cdots \subset \HV^1_n \subset \HV_n / \HV_{n-1}.$$  
A consequence of Corollary \ref{PolynomialOnMilnorInvts} below is that this is in fact a filtration of $\HV_n / \HV_{n-1}$.

\subsection{The filtrations and the canonical map}
\label{FiltrationIsoSection}
We now relate the two filtrations we have defined for homotopy weight systems and homotopy invariants of finite type.

\begin{theorem}
\label{FiltrationIsomorphisms}
For every $k=0,1,\ldots ,n-1$, the isomorphism 
\[
\xymatrix{\HV_n / \HV_{n-1} \ar[r]^-\cong &  \HW_n}
\]
from Theorem \ref{FTFTI} -- given by the canonical map \eqref{CanonicalMap}, and with inverse given by configuration space integrals \eqref{InverseInFTFTI} -- restricts to an isomorphism of subspaces
\[
\HV^{k}_n \cong  \HW^k_n.
\]
\end{theorem}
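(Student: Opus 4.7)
The plan is to show that the isomorphism $\Phi\colon \HV_n/\HV_{n-1} \xrightarrow{\cong} \HW_n^*$ from Theorem \ref{FTFTI} respects both filtrations and thus restricts to the desired isomorphisms. The two key ingredients are already in hand: Lemma \ref{ShuffleProductLemma} translates products of invariants into shuffle products of cocycles, and Proposition \ref{ShuffleProductsOfCocyclesSpan} together with Lemma \ref{TrivalentTrees} decomposes $\HW^k_n / \HW^{k-1}_n$ into shuffle products of IHX-satisfying tree combinations, which correspond precisely to Milnor invariants. With these in place, I would deduce the theorem by descending induction on $k$ (or equivalently ascending induction on $n-k$) using the five lemma on associated graded pieces.

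First I would verify the easy inclusion $\Phi(\HV^{n-k}_n) \subseteq \HW^k_n$. A generating element of $\HV^{n-k}_n$ has the form $\mu_1 \cdots \mu_j$ with $j \geq n-k$ Milnor invariants $\mu_i$ of orders $r_i$ satisfying $\sum r_i = n$. By Lemma \ref{ShuffleProductLemma}, $\Phi(\mu_1 \cdots \mu_j) = \Phi(\mu_1) \bullet \cdots \bullet \Phi(\mu_j)$. By the Habegger--Masbaum-style identification made precise in Lemma \ref{TrivalentTrees}, each $\Phi(\mu_i)$ is an IHX-satisfying linear combination of trivalent trees carrying $r_i - 1$ free vertices, so the shuffle product carries $\sum_i (r_i - 1) = n - j \leq k$ free vertices, as required.

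For the reverse inclusion, I would set up the commutative diagram
\[
\begin{array}{ccccccccc}
0 & \to & \HV^{n-k+1}_n & \to & \HV^{n-k}_n & \to & \HV^{n-k}_n/\HV^{n-k+1}_n & \to & 0 \\
 & & \downarrow \overline{\Phi} & & \downarrow \overline{\Phi} & & \downarrow \overline{\Phi} & & \\
0 & \to & \HW^{k-1}_n & \to & \HW^k_n & \to & \HW^k_n/\HW^{k-1}_n & \to & 0
\end{array}
\]
and induct on $k$, taking as base case $\HV^{n+1}_n = 0 = \HW^{-1}_n$. Assuming the leftmost vertical map is an isomorphism, the five lemma (applied inside the ambient isomorphism $\Phi$, so that injectivity comes for free) reduces the inductive step to showing that the rightmost vertical map is surjective. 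For this, Proposition \ref{ShuffleProductsOfCocyclesSpan} says $\HW^k_n/\HW^{k-1}_n$ is spanned by shuffle products of IHX-satisfying linear combinations of trees; the free-vertex count forces each such spanning element to be a shuffle product of exactly $n-k$ tree combinations, each of which by Lemma \ref{TrivalentTrees} is $\Phi$ of a single Milnor invariant. Applying Lemma \ref{ShuffleProductLemma} once more identifies this as $\Phi$ of a product of exactly $n-k$ Milnor invariants, which lies in the image of $\HV^{n-k}_n / \HV^{n-k+1}_n$.

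The substantive work is all absorbed into the earlier results; the main obstacle in this final argument is simply to keep the two parallel bookkeeping schemes (number of free vertices on the cocycle side versus number of factors in a product of Milnor invariants on the invariant side) consistently aligned via the relation \emph{free vertices} $= n -$ \emph{number of factors}. Once this accounting is made explicit and the diagram chase is carried out, Theorem \ref{FiltrationIsomorphisms} follows as a clean corollary of Theorem \ref{FTFTI}, Proposition \ref{ShuffleProductsOfCocyclesSpan}, and Lemmas \ref{ShuffleProductLemma} and \ref{TrivalentTrees}.
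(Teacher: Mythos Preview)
Your overall architecture matches the paper's: induction on $k$, forward inclusion via the shuffle product lemma and a free-vertex count, and surjectivity on associated graded pieces via Proposition~\ref{ShuffleProductsOfCocyclesSpan}. But there is a genuine gap at the heart of the surjectivity step.

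You write that each IHX-satisfying linear combination of trees ``by Lemma~\ref{TrivalentTrees} is $\Phi$ of a single Milnor invariant.'' Lemma~\ref{TrivalentTrees} says no such thing: it only identifies $\HW^{r-1}_r/\HW^{r-2}_r$ with the space of labelled trivalent trees satisfying IHX and computes its dimension as $(r-1)!$. Nowhere does it mention Milnor invariants. The statement you need---that this space is exactly the image of the Milnor invariants under $\Phi$---is Corollary~\ref{MilnorInvariantsTrivalentTrees}, which is \emph{deduced from} the theorem you are proving, so invoking it here (or Habegger--Masbaum directly) is circular. The paper fills this gap in ``Part 1'' of its inductive step: it observes that $\dim(\HV^1_{k+1}/\HV^2_{k+1}) = k! = \dim(\HW^k_{k+1}/\HW^{k-1}_{k+1})$, and then uses the \emph{induction hypothesis at the smaller order $k+1$} (namely $\HV^2_{k+1}\cong\HW^{k-1}_{k+1}$) to see that the induced map on quotients is injective, hence an isomorphism. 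This is why the paper's induction hypothesis is stated ``for all $j<k$ and for all $n$,'' whereas your induction is phrased for a single fixed $n$ and therefore cannot reach back to the orders $r_i<n$ of the individual factors.

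A smaller imprecision: in the forward inclusion you say ``each $\Phi(\mu_i)$ is an IHX-satisfying linear combination of trivalent trees carrying $r_i-1$ free vertices.'' As the triple linking example $\alpha_{123}=L-M+R-T$ in Section~\ref{TripleLinking} shows, $\Phi(\mu_i)$ is \emph{not} a combination of trees; it is a full cocycle containing chord diagrams too. What is true (and what the paper uses) is simply that every diagram in $\HW_{r_i}^*$ has at most $r_i-1$ free vertices, by Lemma~\ref{FreeVerticesInTrivalentForest}. That suffices for the free-vertex count you want, without any claim about trees.
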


\begin{proof}
A basis element of $\HV^{k}_n$ is represented by a product $v= v_1 \cdots v_\ell$ of $\ell \geq k$ Milnor invariants.  Since configuration space integrals take the shuffle product to the product of invariants, there are graph cocycles $\alpha_1,\ldots ,\alpha_\ell$ such that $\alpha:=\alpha_1 \bullet \cdots \bullet \alpha_\ell$ maps to $v$ (at least in the quotient $\HV_n / \HV_{n-1}$).  
Thus $\alpha \in \HW^k_n$.  Since the configuration space integral map is inverse to the canonical map, the latter takes $\HV^{k}_n$ (injectively) to $\HW^k_n$:
\begin{equation}
\label{FiltIsoEqn}
\xymatrix@C-2.2pc@R-0.6pc{
\{0\} = & \HV^{n+1}_n \ar[d] & \subset \HV^n_n \ar[d] & \subset \HV^{n-1}_n \ar[d] & \subset \cdots \subset \negthinspace & \HV^1_n \ar[d]  & \subset \HV_n / \HV_{n-1} \\ 
\{0\} =: & \HW^{n+1}_n & \subset \HW^n_n & \subset \HW^{n-1}_n & \subset \cdots \subset \negthinspace &  \HW^{1}_n &  =\HW_n \qquad \quad \,\,
}
\end{equation}
To show that each such map is an isomorphism, we proceed by downward induction on $k$, where the basis case of filtration stage $n+1$ is trivially true.  So assume the map is an isomorphism at filtration stage $k+1$, and consider filtration stage $k$.
The above diagram implies we get a map on filtration quotients $\HV^{k}_n / \HV^{k+1}_n \to \HW^k_n / \HW^{k+1}_n$.  
The induction hypothesis together with the injectivity of the canonical map implies that this map is injective.  

For surjectivity, it suffices to count dimensions.  A basis for $\HV^{k}_n / \HV^{k+1}_n$ is given by products $v_1 \cdots v_{k}$ such that  
\begin{itemize} 
\item each $v_i$ is a Milnor invariants of $(n+1)$-component long links, and
\item $\sum (\ell(v_i)-1)=n$, where $\ell(v_i)$ is the length of the Milnor invariant $v_i$.  
\end{itemize}
By Corollary \ref{ShuffleProductsSpanHW}, $\HW^k_n / \HW^{k+1}_n$ injects into $S(\mathcal{T})^k_n$.  A basis for the latter space is given by (shuffle) products $\tau_1 \bullet \cdots \bullet \tau_{k}$ such that 
\begin{itemize}
\item each $\tau_i$ is a linear combination of Lie-oriented trees, with leaves distinctly labeled by $\{1,\ldots ,n+1\}$, satisfying the IHX conditions, and 
\item $\sum (\ell(\tau_i) -1) = n$, where $\ell(\tau_i)$ is the number of leaves of $\tau_i$.  
\end{itemize} 
The number of eligible $v_i$ with $\ell(v_i)=\ell$ is $(\ell-1)!$, as noted in Section \ref{MilnorInvs}.
The same is true of the number of eligible $\tau_i$ with $\ell(\tau_i)=\ell$ by Lemma \ref{TrivalentTrees}.  Thus in the map $\HV^{k}_n / \HV^{k+1}_n \to \HW^k_n / \HW^{k+1}_n$, the dimension of the codomain is at most the dimension of the domain, hence the map is an isomorphism.  
By induction on $k$ and the five lemma, these isomorphisms of filtration quotients yield the desired isomorphisms $\HV^{k}_n \cong \HW^k_n$.
\end{proof}

Note that the last step of the proof above also proves the following statement: 

\begin{corollary}
\label{HWLeadTermsIsIso}
The injective maps $\HW^k_n / \HW^{k+1}_n \to S^k(\mathcal{T})_n$ in Corollary \ref{ShuffleProductsSpanHW}, given by taking leading terms, are in fact isomorphisms.
\qed
\end{corollary}


\begin{corollary}
\label{MilnorInvariantsTrivalentTrees}
The space of type $n$ Milnor invariants is isomorphic to the space $\mathcal{T}_n$ of Lie-oriented trivalent trees with $n+1$ distinctly labeled leaves which satisfy the IHX conditions.
\end{corollary}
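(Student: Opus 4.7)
The plan is to stack Theorem \ref{FiltrationIsomorphisms} at two consecutive filtration levels on top of the combinatorial description of the top graded piece in Lemma \ref{TrivalentTrees}. Applying Theorem \ref{FiltrationIsomorphisms} with $k=n-1$ and then with $k=n-2$ yields isomorphisms $\HV^1_n \cong \HW^{n-1}_n$ and $\HV^2_n \cong \HW^{n-2}_n$, and hence an isomorphism of quotients
\[
\HV^1_n / \HV^2_n \ \cong \ \HW^{n-1}_n / \HW^{n-2}_n.
\]

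Next, I would interpret both sides. Shifting the index in Lemma \ref{TrivalentTrees} (its $n$ becomes our $n-1$, so its order subscript becomes $n$ and its free-vertex superscript becomes $n-1$) identifies the right-hand side with the space of IHX-admissible linear combinations of trivalent trees having $n+1$ distinctly labeled leaves, of dimension $(n-1)!$. For the left-hand side, every generator of $\HV^1_n$ is, by definition, a product of one or more Milnor invariants, and every such generator that is a product of two or more factors lies in $\HV^2_n$ by Lemma \ref{ProductOfFiniteTypes} and hence is killed in the quotient. So $\HV^1_n / \HV^2_n$ is spanned by the images of single type $n$ Milnor invariants.

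The corollary then follows once we verify that the resulting surjection from the space of type $n$ Milnor invariants onto $\HV^1_n / \HV^2_n$ is an isomorphism. Rather than check injectivity by hand, which would amount to showing that no nonzero linear combination of type $n$ Milnor invariants agrees modulo $\HV_{n-1}$ with a combination of products of strictly lower-order Milnor invariants, I would appeal to a dimension count: by Milnor's classical result recalled in Section \ref{MilnorInvs}, the domain has dimension $(n-1)!$, matching the dimension of the target computed above, so the surjection must be bijective.

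The main obstacle is this last dimension comparison, since it relies on the external input that the Milnor link-homotopy invariants of $(n+1)$-component string links form an $(n-1)!$-dimensional space. Modulo that classical input, the argument is essentially a two-line corollary of the main theorem; what makes the identification with trees modulo IHX come out on the nose is precisely the coincidence of Milnor's $(n-1)!$ with the combinatorial $(n-1)!$ appearing in Lemma \ref{TrivalentTrees}. Low-degree cases ($n=1,2$) where Lemma \ref{TrivalentTrees} is not stated should be checked separately, but they reduce to the linking-number and triple-linking-number computations already worked out in Section \ref{Examples}.
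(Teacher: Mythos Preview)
Your proof is correct and follows essentially the same route as the paper: invoke Theorem~\ref{FiltrationIsomorphisms} to obtain $\HV^1_n/\HV^2_n \cong \HW^{n-1}_n/\HW^{n-2}_n$, then identify the right side with IHX-admissible trees via Lemma~\ref{TrivalentTrees} and the left side with Milnor invariants. The paper's version is terser---it simply declares $\HV^1_n/\HV^2_n$ to be ``the space of type $n$ Milnor invariants'' and points to Part~1 of the theorem's proof, where exactly the $(n-1)!$ dimension comparison you spell out has already been carried through.
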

\begin{proof}
Consider the composite below:
\[
\HV^1_n/\HV^2_n \longrightarrow \HW^{1}_n/\HW^{2}_n \longrightarrow S^1(\mathcal{T})_n = \mathcal{T}_n
\]
The first map (induced by the canonical map) is well-defined and an isomorphism by Theorem \ref{FiltrationIsomorphisms} with $k=1$ and $k=2$.  The second map (which takes leading terms) is an isomorphism by Corollary \ref{HWLeadTermsIsIso}.
Now just note that the left side has a basis represented by type $n$ Milnor invariants.
\end{proof}
  
\begin{remarks}\label{Dualizing} \
\begin{enumerate}  
\item[(a)] 
By dualizing from weight systems to diagrams, one can also describe this space as the \emph{quotient} $\mathcal{T}_n^*$ of such trees by the IHX relations, as often appears in the literature.
This correspondence goes back at least as far as Habegger and Masbaum \cite[Theorem 6.1, Proposition 10.6]{HabeggerMasbaum}.  
Essentially, this quotient of the space of diagrams corresponds to homology, whereas weight systems correspond to cohomology.  
The algebra of weight systems is thus better suited to
configuration space integrals, and it brings us closer to being able to write formulae for Milnor invariants in terms of those integrals.
%
The fact that we work with filtration quotients $\HW^k_n / \HW^{k+1}_n$ above seems to correspond to the ``first non-vanishing Milnor invariant" caveat in Habegger and Masbaum's Theorem 6.1. 
\item[(b)]
Other authors have used a co-commutative Hopf algebra structure on the space of diagrams (dual to weight systems).  The space of weight systems also has a Hopf algebra structure, with the coproduct coming from stacking string links and hence deconcatenation of diagrams.  However, this Hopf algebra is not co-commutative.  Moreover, though it is possibly true, we have not yet proven that configuration space integrals for long links respects the coproduct.  
\end{enumerate}
\end{remarks}

\begin{corollary}
\label{PolynomialOnMilnorInvts}
The space $\HV$ of finite type homotopy invariants of long links is the polynomial algebra on the Milnor invariants.
\end{corollary}
\begin{proof}
By Theorem \ref{FiltrationIsomorphisms}, the rightmost map in \eqref{FiltIsoEqn} is an isomorphism.  Hence $\HV^1_n = \HV_n / \HV_{n-1}$, and every finite type invariant is a linear combination of products of Milnor invariants. 
\end{proof}
Note that the only facts about Milnor invariants that we have used are that they are of finite type and that the number of linearly independent products of them at a given finite type $n$ coincides with the dimension of the appropriate space of forests.

\begin{remark}
There are other proofs of Corollary \ref{PolynomialOnMilnorInvts}.  For example, by considering the space dual to $\HW$, Bar-Natan uses a co-commutative Hopf algebra structure and an analogue of the Poincar\'{e}--Birkhoff--Witt correspondence \cite[Theorem 3]{BarNatanJKTR}.  In a similar vein, one could use the fact that finite type homotopy string link invariants correspond to the universal enveloping algebra of the Lie algebra of the lower central series of the link group (to which Milnor invariants are dual); the case of pure braids is addressed in the Ph.D.~thesis of Willerton \cite[Chapter 4.3]{WillertonThesis}.
\end{remark}

\begin{theorem}
\label{AlgIsos}
The Milnor invariants give canonical isomorphisms of algebras $\HV \cong \HW \cong S(\mathcal{T})$, using the products of  invariants on $\HV$, the shuffle product on $\HW$, and the usual product on $S(\mathcal{T})$.  
\end{theorem}
\begin{proof}
Consider the following diagram, where the isomorphisms in the top row come from Theorem \ref{FiltrationIsomorphisms} and Corollary \ref{HWLeadTermsIsIso} on each summand. 
\[
\xymatrix@C-0.5pc@R-0.2pc{
\bigoplus_{n,k} \HV^k_n / \HV^{k+1}_n  \ar[r]^-\cong \ar@{<->}[d]^{\cong} & 
\bigoplus_{n,k} \HW^k_n / \HW^{k+1}_n \ar[r]^-\cong \ar@{<->}[d]^{\cong}& 
\bigoplus_{n,k} S^k(\mathcal{T})_n \ar@{<->}[d]^{\cong} \\
\HV  \ar[r]^-{\cong} & \HW  \ar[r]^-{\cong} & S(\mathcal{T})
}
\]
By Corollary \ref{PolynomialOnMilnorInvts} the Milnor invariants give a canonical decomposition of $\HV$ into the direct sum above it.  A polynomial in Milnor invariants gives a well defined element of $\HW$ (via the canonical identification $\HW \cong \bigoplus_n \HW_n$), and we can use this to obtain the middle vertical arrow.  The symmetric algebra $S(\mathcal{T})$ can be canonically identified with the direct sum above it, and this determines the lower-right horizontal arrow.
We already know that the left-hand maps take the product of invariants to the shuffle product (since the inverse map of configuration space integrals is an algebra map).  We also know from Theorem \ref{LWSubalgebra} that the right-hand maps are algebra maps for any choice of splitting of $\HW$.  The only new twist is that we now have a canonical splitting using the Milnor invariants.
\end{proof}

By tracing the definitions of the maps involved, we may deduce from Theorem \ref{AlgIsos} the following partial recipe for constructing Milnor invariants via configuration space integrals:
\begin{corollary}
\label{PartialRecipe}
Suppose $\alpha$ is a linear combination of (leaf-labeled, Lie-oriented) trivalent forests satisfying the IHX condition such that each forest has  $k$ components with $n_1+1,\ldots ,n_k+1$ leaves respectively.  Then one can obtain a product of (a linear combination of) Milnor invariants of types $n_1,\ldots ,n_k$ (equivalently, lengths $n_1+1,\ldots ,n_k+1$) by the following procedure: 
\begin{itemize}
\item complete $\alpha$ to a cocycle $\alpha'$ by adding terms with more components (using the STU relation), and then
\item take the configuration space integral associated to $\alpha'$.
\end{itemize}
All choices of the terms with more components yield the same invariant up to products of more than $k$ Milnor invariants.
\qed
\end{corollary}

\subsection{Cohomology classes in spaces of link maps}\label{Cohomology}
%
We will now observe that the graph cocycles which give rise to homotopy long link invariants in the classical dimension $d=3$ also yield cohomology classes of arbitrarily high degree in $\mathrm{Link}(\coprod_{i=1}^m \R, \R^d)$ for $d>3$.

\begin{theorem}
\label{HigherCohomology}
There is a subalgebra of cohomology classes in $\mathrm{Link}(\coprod_{i=1}^m \R, \R^d)$ isomorphic to the polynomial algebra on Milnor homotopy invariants.  
\end{theorem}

\begin{proof}[Sketch of proof]
Recall the integration maps 
 (\ref{CochainMap}),
\[
\mathcal{I}: \HD_{k,n} \longrightarrow \Omega_{dR}^{n(d-3)+k} \mathrm{Link}(\coprod_{i=1}^m \R, \R^d),
\]
where $k$ and $n$ are the defect and order and where tree diagrams on the left-hand side are given an orientation as discussed in Section \ref{Background}.  In particular, the left-hand side also depends on the parity of the ambient dimension $d$, even though it is suppressed from the notation.  
These integration maps are maps of differential algebras \cite[Theorem 4.33]{KMV}.  Now consider the induced maps on cohomology in defect zero for arbitrary ambient dimension $d$:
\begin{equation}
\label{DefectZeroArbitraryD}
\mathcal{I}: Z(\HD_{0,n}) \longrightarrow \mathrm{H}^{n(d-3)} \mathrm{Link}(\coprod_{i=1}^m \R, \R^d).
\end{equation}

By the same argument used in Section 2 and Theorem 6.1 of \cite{CCRL-AGT}, one can show that   (\ref{DefectZeroArbitraryD}) is injective for any $d$.  In fact, this argument is a generalization of the injectivity argument for $d=3$ used in the proof of Theorem \ref{FTFTI}.  
One first constructs a cycle corresponding to each chord diagram $\Gamma$; the cycle is an $n$-fold product of $(d-3)$-cycles, obtained by resolving double-points of a singular link in $\R^d$ corresponding to the chords in $\Gamma$.  Then one shows that any integral $I_{\Gamma'}$ is nonzero on this cycle if and only if $\Gamma'=\Gamma$.  Since every cocycle of trivalent diagrams has a chord diagram (see \cite[Proposition 5.1]{CCRL-AGT} for the case of knots), this ultimately establishes the injectivity of (\ref{DefectZeroArbitraryD}).

For $d=3$, the integration (\ref{DefectZeroArbitraryD}) maps isomorphically onto the finite type invariants \cite[Theorem 5.8]{KMV}.
We claim the left-hand side is actually independent of $d$.  
It clearly depends at most on the parity of $d$.
To show it does not depend on the parity, one can use the fact that for tree diagrams, orientations of odd and even type are in canonical correspondence.  The correspondence does not quite extend to arbitrary forest diagrams, but for forest diagrams in $\HD_{0,n}$, one can substitute a smaller space of ``braid diagrams'' where the trees have an ordering along the link strands and where the correspondence can thus be applied.  We refer the reader to our forthcoming work with Komendarczyk \cite{KKV-Braids-Integrals} for further details on this nuance. 
As a result, we can deduce from Corollary \ref{PolynomialOnMilnorInvts} that the space of graph cocycles $\HW_n \subset \HD_{0,n}$ is also isomorphic to the polynomial algebra on Milnor invariants.
Then applying the injectivity in cohomology,
 we get a subspace of $\mathrm{H}^*(\mathrm{Link}(\coprod_{i=1}^m \R, \R^d))$ isomorphic to the polynomial algebra on Milnor invariants.
\end{proof}

In particular, we get a \emph{nontrivial} cohomology class of degree $n(d-3)$ for every type $n$ product of Milnor invariants.  We will refer to such a cohomology class as a \emph{generalized Milnor invariant}.

We conjecture that the map \eqref{CochainMap} from the whole graph complex induces a surjection in cohomology.  We already know that in the case $d=3$, the restriction (\ref{DefectZeroArbitraryD}) induces an isomorphism on $\mathrm{H}^0$.  In fact, the Vassiliev conjecture holds in this case \cite{BarNatanJKTR}, i.e.~Vassiliev invariants separate homotopy string links.
It would also be interesting to determine for arbitrary $d$ how much of the cohomology of the space of link maps comes from the defect zero part of the graph complex.  Along these lines, we can loosely formulate the following:
 
\begin{conjecture}
All of the cohomology of the space of link maps $\mathrm{Link}(\coprod_{i=1}^m \R, \R^d)$, $d>3$, comes from integrals corresponding to products of Milnor invariants.
\end{conjecture}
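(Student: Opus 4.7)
The plan is to establish the conjecture in two stages: first, that the graph-complex integration map $\mathcal{I}\co \HD\to \Omega_{dR}^*\mathrm{Link}(\coprod_{i=1}^m \R,\R^d)$ is surjective in cohomology; and second, that $H^*(\HD)$ is generated, as an algebra, by the defect-zero cocycles which correspond (in the case $d=3$) to products of Milnor invariants. Combined, these two statements would identify $H^*(\mathrm{Link}(\coprod_{i=1}^m\R,\R^d))$ with the polynomial algebra on generalized Milnor invariants.

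For the first stage, I would extend the Cattaneo--Cotta-Ramusino--Longoni strategy already used in the excerpt to prove injectivity of \eqref{DefectZeroArbitraryD}. One needs to construct, for each cocycle $\alpha\in Z(\HD^k_n)$, a dual cycle in $\mathrm{Link}(\coprod_{i=1}^m\R,\R^d)$ of the expected dimension $n(d-3)+k$ on which $\mathcal{I}(\alpha)$ evaluates nontrivially, and to check that every de Rham cohomology class arises this way. The deeper input required is a form of rational formality for the link-map space, analogous to the Lambrechts--Volic formality results for embedding spaces of manifolds in $\R^d$, $d>3$. Concretely, one would try to show that a Goodwillie--Weiss-type tower converging to $H^*(\mathrm{Link}(\coprod_{i=1}^m\R,\R^d))$ collapses rationally and is identified with $H^*(\HD)$.

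For the second stage, observe that $\mathcal{I}$ is a map of differential graded algebras \cite[Theorem 4.33]{KMV}, so its cohomological image is a subalgebra. By Corollary \ref{PolynomialOnMilnorInvts} together with Proposition \ref{ShuffleProductsOfCocyclesSpan}, the defect-zero cocycles already form a polynomial algebra on leaf-labeled trivalent trees modulo IHX at the level of diagrams (a statement which is purely combinatorial and independent of $d$). To conclude, one would filter $\HD$ by defect and try to show that the resulting spectral sequence collapses onto the defect-zero line: higher-defect cocycles would be expressed, up to coboundary, in terms of trivalent diagrams by repeated use of the STU-type moves at non-trivalent vertices, guided by Lemma \ref{TrivalentTrees} and the analysis of $\HW^k_n/\HW^{k-1}_n$ in the excerpt.

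The main obstacle will be establishing the rational formality of $\mathrm{Link}(\coprod_{i=1}^m \R,\R^d)$ for $d>3$ and hence the surjectivity of $\mathcal{I}$. The existing formality machinery for embedding spaces depends on the action of the little-disks operad and has no fully developed analogue for link maps, whose constraint is only pairwise disjointness rather than self-disjointness. A natural intermediate step would be to prove the conjecture first for the embedding space $\Emb(\coprod_{i=1}^m\R,\R^d)$, where the existing machinery applies more directly, and then compare to link maps via the forgetful map $\Emb\to \mathrm{Link}$; for $d>3$, general-position arguments should make this map close to a rational cohomology isomorphism, though verifying this is nontrivial. A secondary difficulty is the combinatorial one: the cohomology of $\HD$ in positive defect is not fully understood, so the defect-collapse step above is itself a substantial piece of the proof, of independent interest.
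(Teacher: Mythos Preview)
The statement you are attempting to prove is a \emph{conjecture} in the paper, not a theorem; the paper offers no proof and explicitly labels it as a loosely formulated open problem. So there is no ``paper's own proof'' to compare against, and any genuine proof would constitute new mathematics beyond the paper.

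Your proposal is not a proof but a research outline, and you are candid about this: you identify as the ``main obstacle'' the rational formality of $\mathrm{Link}(\coprod_{i=1}^m\R,\R^d)$ for $d>3$, and as a ``secondary difficulty'' the collapse of the defect spectral sequence for $\HD$. Both of these are precisely the missing ingredients, and neither is established anywhere in the literature for link maps. In particular, the Lambrechts--Voli\'c formality machinery and the Goodwillie--Weiss tower arguments you invoke are developed for embedding spaces, and the passage from $\Emb$ to $\mathrm{Link}$ via the forgetful map is not known to be a rational cohomology isomorphism (your own parenthetical concedes this is ``nontrivial''). Likewise, the claim that higher-defect cocycles in $\HD$ are cohomologous to defect-zero ones is not a consequence of Lemma~\ref{TrivalentTrees} or the analysis of $\HW^k_n/\HW^{k-1}_n$; those results concern only the defect-zero part and say nothing about the higher-defect cohomology of $\HD$.

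In short: your two-stage strategy is a reasonable plan of attack on an open conjecture, but each stage rests on a substantial unproven assertion, so what you have written is a program rather than a proof.
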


We would also like to investigate geometric interpretations of our generalized Milnor invariants.  In particular, it would be interesting to see if these classes are related to the invariants of link maps $\mathrm{Link}(S^{d_1}\sqcup S^{d_2}\sqcup\cdots\sqcup S^{d_r}, \R^d)$ defined by Koschorke \cite{Koschorke97}.  These invariants are defined geometrically (they can be thought of as counting an overcrossing locus of a link) and can also be regarded as generalizations of Milnor invariants.  An interesting connection could also be established between Munson's work \cite{MunsonJTop}
on relating manifold calculus of functors to Koschorke's generalizations of Milnor invariants.  The connection between configuration space integrals for long links and manifold calculus of functors was established by the first author in \cite{Koytcheff-JHRS}
who factored the integrals for links through the Taylor tower of these spaces.  The functor calculus of Taylor towers could thus serve as a common ground for configuration space integrals and Koschorke's invariants and provide the context for relating them to each other.


\bibliographystyle{amsplain}
\bibliography{refs}

\end{document}